\crefname{section}{Section}{Sections}
\crefname{subsection}{\S}{\S\S}
\crefname{subsubsection}{\S}{\S\S}
\theoremstyle{plain}
\newtheorem{lemma}{Lemma}[section]
\newtheorem{proposition}[lemma]{Proposition}
\newtheorem{corollary}[lemma]{Corollary}
\newtheorem{theorem}[lemma]{Theorem}
\theoremstyle{nonumberplain}
\newtheorem{theoremN}{Theorem}
\newtheorem{propositionN}{Proposition}
\theoremstyle{plain}
\newtheorem{definition}[lemma]{Definition}
\newtheorem{remark}[lemma]{Remark}
\newtheorem{convention}[lemma]{Convention}
\crefname{definition}{definition}{definitions}
\crefname{ex}{example}{examples}
\crefname{remark}{remark}{remarks}
\crefname{convention}{convention}{conventions}
\crefname{notation}{notation}{notations}
\crefname{table}{table}{tables}
\crefname{lemma}{lemma}{lemmas}
\crefname{proposition}{proposition}{propositions}
\crefname{corollary}{corollary}{corollaries}
\crefname{theorem}{theorem}{theorems}
\crefname{enumi}{}{}
\crefname{assumption}{assumption}{Assumptions}
\crefname{equation}{}{}
\numberwithin{equation}{section}
\renewcommand{\theequation}{\thesection-\arabic{equation}}
\theoremstyle{nonumberplain}
\newtheorem{proof}{Proof}
\newcommand\pf[1]{\newtheorem{#1}{Proof of \Cref{#1}}}
\newcommand\bC{{\mathbb C}}
\newcommand\bG{{\mathbb G}}
\newcommand\bH{{\mathbb H}}
\newcommand\cH{{\mathcal H}}
\newcommand\cK{{\mathcal K}}
\newcommand\cV{{\mathcal V}}
\newcommand\fm{{\mathfrak m}}
\newcommand\fn{{\mathfrak n}}
\DeclareMathOperator{\id}{id}
\newcommand\numberthis{\addtocounter{equation}{1}\tag{\theequation}}
\global\long\def\tpr{\mathbin{\xymatrix{*+<.7ex>[o][F-]{\scriptstyle \bot}}
 } }%
\newcommand\li[1]{L^{\infty}(#1)}
\newcommand{\Ww}{\mathds{W}}
\newcommand{\wW}{\text{\reflectbox{$\Ww$}}\:\!}
\newcommand{\qedhere}{\mbox{}\hfill\ensuremath{\blacksquare}}
\title{Full quantum crossed products, invariant measures, and type-I lifting}
\author{Alexandru Chirvasitu}
\begin{document}

\date{}

\newcommand{\Addresses}{{
  \bigskip
  \footnotesize

  \textsc{Department of Mathematics, University at Buffalo, Buffalo,
    NY 14260-2900, USA}\par\nopagebreak \textit{E-mail address}:
  \texttt{achirvas@buffalo.edu}

}}

\maketitle

\begin{abstract}
  We show that for a closed embedding $\mathbb{H}\le \mathbb{G}$ of locally compact quantum groups (LCQGs) with $\mathbb{G}/\mathbb{H}$ admitting an invariant probability measure, a unitary $\mathbb{G}$-representation is type-I if its restriction to $\mathbb{H}$ is. On a related note, we also prove that if an action $\mathbb{G}\circlearrowright A$ of an LCQG on a unital $C^*$-algebra admits an invariant state then the full group algebra of $\mathbb{G}$ embeds into the resulting full crossed product (and into the multiplier algebra of that crossed product if the original algebra is not unital).
  
  We also prove a few other results on crossed products of LCQG actions, some of which seem to be folklore; among them are (a) the fact that two mutually dual quantum-group morphisms produce isomorphic full crossed products, and (b) the fact that full and reduced crossed products by dual-coamenable LCQGs are isomorphic.
\end{abstract}

\noindent {\em Key words: locally compact quantum group; crossed product; von Neumann algebra; $C^*$-algebra; type-I; unitary representation; amenable; coamenable}

\vspace{.5cm}

\noindent{MSC 2020: 46L67; 22D25; 20G42; 47L65; 22D10}


\section*{Introduction}

The paper fits within the framework of {\it locally compact quantum groups} (LCQGs, on occasion), in the sense of \cite{kvcast,kvwast,kus-univ} (with additional background in, say, \cite{wor-mult,sw1,sw2,mnw}). The results extend, to this quantum setting, a number of statements on (classical, i.e. non-quantum) locally compact groups that blend, in various proportions, the three items mentioned in the title. Relegating the background-recollection to \Cref{se:prel}, these are
\begin{itemize}
\item {\it full crossed products} $C_0^u(\widehat{\bG})\ltimes_f A $ attached to an action $\bG\circlearrowright A$ by an LCQG on a $C^*$-algebra $A$ (with $\widehat{\bG}$ denoting the Pontryagin dual of $\bG$).

  The usual classical construction (e.g. \cite[\S II.10.3.7]{blk}) transports to the quantum setting and features prominently, say, in \cite{vs-impr}.
\item {\it invariant measures}, which phrase, in the present non-commutative setting, refers to $\bG$-invariant states or functionals on $A$ acted upon by $\bG$.
\item {\it type-I lifting}, which refers to results of the following general shape:

  Given a closed embedding $\bH\le \bG$ with such and such a suite of properties (varying with the source/result), a unitary $\bG$ representation is of type I \cite[Definition 5.4.2]{dixc} provided its restriction to $\bH$ is. 
\end{itemize}

It is this latter strand of thought, in particular, that provided the initial motivation. Classically, such problems are posed and solved for instance in \cite{klm1,klm2,gtm,gk}. \cite[Conjecture II]{klm2}, for instance, states that type-I lifting in the above sense goes through if $\bH\le \bG$ is {\it cocompact}, i.e. the homogeneous space $\bG/\bH$ is compact.

Various cases of that conjecture are then proven, but one emerging pattern, of particular relevance to the present work, is that $\bG$-invariant measures on $\bG/\bH$ are extremely useful in delivering such lifting results. In fact, by \cite[Proposition 2.2]{klm2}, lifting holds as soon as such a finite measure exists, even {\it without} the compactness assumption on $\bG/\bH$.

This, then, brings two of the above-mentioned ingredients into the fold; the third (crossed products) comes about naturally as part of the standard induction-restriction toolkit for studying unitary representations. As is clear from a perusal of \cite{gk} and \cite[\S 1]{gtm} (modulo different notation), it is fruitful to
\begin{itemize}
\item recover $\bH$-representations as representations of the full crossed product $C_0^u(\widehat{\bG})\ltimes_f C_0(\bG/\bH)$ by {\it imprimitivity} \cite[\S 3.7]{mack-unit};  
\item and then make use of whatever additional structure the crossed product affords.
\end{itemize}

Much of this conceptual amalgam transports over to the quantum setting. We Summarize (and by necessity, abbreviate) some of the results.

The analogue of \cite[Proposition 2.2]{klm2} holds for quantum groups, as do some of the proof techniques (\Cref{th:exp} and \Cref{cor:slift,cor:wlift}):

\begin{theoremN}
  Let $\bH\le \bG$ be a closed quantum subgroup of an LCQG and assume $\bG/\bH$ admits a $\bG$-invariant finite measure. 
  \begin{enumerate}[(a)]
  \item For a unitary $\bG$-representation $U$, the commutant $R(\bH)'$ of the restriction $U|_{\bH}$ carries a normal conditional expectation onto the commutant $R(\bG)'$ of $U$.
  \item In particular, the latter commutant is type-I if the former is, and hence the type-I property for $U|_{\bH}$ entails that of $U$.
  \item Specializing once more, if $\bH$ is type-I then so is $\bG$. 
    \qedhere
  \end{enumerate}
\end{theoremN}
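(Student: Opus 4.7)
The plan is to mirror the classical Klein--Lipsman argument \cite[Proposition 2.2]{klm2} in the LCQG setting, with the bulk of the work concentrated in part (a). I would realize the representation $U$ as a unitary $U\in M(\cK(H_U)\otimes C_0^u(\bG))$ and form the adjoint coaction
\[
\alpha: B(H_U)\to B(H_U)\ol{\otimes}\li{\bG},\qquad \alpha(T) := U^*(T\otimes 1)U.
\]
Then $R(\bG)'$ is the fixed-point algebra $B(H_U)^{\alpha}$, while $R(\bH)'$ is the fixed-point algebra of the composite coaction $\alpha_{\bH}$ obtained from $\alpha$ via the restriction $\li{\bG}\to \li{\bH}$. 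The expectation $E:R(\bH)'\to R(\bG)'$ is then to be produced by ``averaging'' $\alpha(T)$ against the $\bG$-invariant probability state $\mu$ on $\li{\bG/\bH}$ furnished by the hypothesis.

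The central observation would be that for $T\in R(\bH)'$, the element $\alpha(T)$ descends to $B(H_U)\ol{\otimes}\li{\bG/\bH}$, reading $\li{\bG/\bH}\subseteq\li{\bG}$ as the right $\bH$-coinvariants; this is the quantum analogue of the classical fact that $g\mapsto U(g)TU(g)^*$ is a well-defined function on $\bG/\bH$ whenever $T$ commutes with $U|_{\bH}$. With this descent in hand, set
\[
E(T) := (\id\otimes \mu)(\alpha(T)).
\]
Normality, unitality, and complete positivity of $E$ are immediate from the corresponding properties of slice maps and of $\mu$. That $E(T)$ lies in $R(\bG)'$ should follow from the $\bG$-invariance of $\mu$ together with the coaction identity $(\id\otimes \Delta)\circ\alpha = (\alpha\otimes\id)\circ\alpha$; and on $R(\bG)'$ itself, $\alpha(T)=T\otimes 1$ reduces $E(T)$ to $T$, confirming that $E$ is a conditional expectation.

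Parts (b) and (c) are then quick corollaries. For (b), I invoke the standard fact that the image of a type-I von Neumann algebra under a normal conditional expectation onto a von Neumann subalgebra is again type-I, applied to $E:R(\bH)'\to R(\bG)'$. For (c), ``$\bH$ is type-I'' means every unitary $\bH$-representation is type-I; so for any $\bG$-representation $U$ the restriction $U|_{\bH}$ is type-I, whereupon (b) forces $U$ itself to be type-I. Since $U$ was arbitrary, $\bG$ is type-I.

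The hardest step will be justifying the descent $\alpha(T)\in B(H_U)\ol{\otimes}\li{\bG/\bH}$ for $T\in R(\bH)'$. In the classical case this is a transparent pointwise computation, whereas in the LCQG setting it requires identifying $\li{\bG/\bH}$ as the correct right coideal of $\bH$-coinvariants in $\li{\bG}$, and translating ``$T$ commutes with $U|_{\bH}$'' into the appropriate equivariance of $\alpha(T)$ under the right $\bH$-coaction on $\li{\bG}$. Once this piece of the quantum-homogeneous-space formalism is pinned down, the remainder of the argument should run parallel to the classical one.
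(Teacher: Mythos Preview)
Your proposal is correct and follows the paper's proof essentially step for step: form the adjoint (co)action $T\mapsto V(T\otimes 1)V^*$ (with $V=\mathrm{flip}(U)$, so that the coaction identity actually holds), show it lands in $B(\cH)\ol\otimes\li{\bG/\bH}$ on $R(\bH)'$, slice by the invariant state, verify the conditional-expectation axioms, and then cite Tomiyama for (b) and specialize for (c). One caution on the ``hardest step'': for a Vaes-closed quantum subgroup there is in general no normal $*$-homomorphism $\li{\bG}\to\li{\bH}$, so $\alpha_{\bH}$ cannot be defined by post-composing $\alpha$ with such a map; the descent is obtained exactly as you yourself indicate further down, via the right $\bH$-coaction $\iota_r:\li{\bG}\to\li{\bG}\ol\otimes\li{\bH}$ together with the identity $(\id\otimes\iota_r)V=V_{12}V_{r,13}$ linking $U$ to $U|_{\bH}$.
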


Classically, if $\bG/\bH$ carries a finite $\bG$-invariant measure {\it and} is compact, then, as a matter of presumably independent interest, the obvious map
\begin{equation*}
  C^*(\bG)\to C^*(\bG)\ltimes_f C(\bG/\bH)
\end{equation*}
is an embedding; this is noted in the course of the proof of \cite[Proposition 4.2]{gk} and gives an alternative proof for type-I lifting in this (more restrictive) case \cite[Corollary 4.5]{gk}. Once more, the quantum-flavored result holds; an amalgam of \Cref{th:prestate} and \Cref{cor:cpct}, somewhat weakened here for brevity reads

\begin{theoremN}
  If a $C^*$-algebra $A$ has an invariant state with respect to an action $\bG\circlearrowright A$, then the canonical $C^*$ morphism
  \begin{equation*}
    C_0^u(\widehat{\bG})\to M(C_0^u(\widehat{\bG})\ltimes_f A)
  \end{equation*}
  is an embedding, which factors through
  \begin{equation*}
    C_0^u(\widehat{\bG})\to C_0^u(\widehat{\bG})\ltimes_f A
  \end{equation*}
  if $A$ is in additional unital.  \qedhere
\end{theoremN}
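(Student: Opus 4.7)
The plan is to produce, given any faithful non-degenerate representation $\rho:C_0^u(\widehat\bG)\to B(K)$, a $*$-representation of $B := C_0^u(\widehat\bG)\ltimes_f A$ through which $\iota:C_0^u(\widehat\bG)\to M(B)$ is recognised as faithful; the unital-case factorization is then a short structural remark.

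First, form the GNS triple $(\pi_\varphi, H_\varphi, \xi_\varphi)$ of the invariant state $\varphi$. Standard LCQG machinery (already relied on in the paper) produces a unitary corepresentation $V$ of $\bG$ on $H_\varphi$ that fixes $\xi_\varphi$ and makes $(\pi_\varphi, V)$ a covariant pair for $\alpha$; equivalently, the associated representation $V^\vee:C_0^u(\widehat\bG)\to B(H_\varphi)$ satisfies $V^\vee(x)\xi_\varphi=\epsilon^u(x)\xi_\varphi$ for every $x$, where $\epsilon^u$ is the counit of $C_0^u(\widehat\bG)$. Now pick a faithful corepresentation $\widetilde\rho$ on $K$ corresponding to $\rho$, and form the tensor-product corepresentation $V\tp\widetilde\rho$ on $H_\varphi\otimes K$. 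A routine leg-numbering check shows that $(\pi_\varphi\otimes 1_K,\; V\tp\widetilde\rho)$ is covariant for $\alpha$: the $A$-leg commutes with the $K$-leg of $\widetilde\rho$, and the $H_\varphi$-piece of the covariance follows from that of $(\pi_\varphi,V)$. The universal property of the full crossed product then yields a non-degenerate $*$-homomorphism $\tau:B\to B(H_\varphi\otimes K)$ whose strict extension satisfies $\overline\tau\circ\iota = (V^\vee\otimes\rho)\circ\Delta^u$.

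The invariance of $\xi_\varphi$ makes $\bC\xi_\varphi\otimes K\subset H_\varphi\otimes K$ an invariant subspace for $\overline\tau\circ\iota$, and the counit identity $(\epsilon^u\otimes\id)\Delta^u=\id$ shows that on this subspace $\overline\tau\circ\iota$ restricts to $\rho$; faithfulness of $\rho$ then forces $\iota$ to be injective. For the unital case, $A$ being unital makes the non-degenerate morphism $\widehat\iota:A\to M(B)$ unital, and since products $\iota(x)\widehat\iota(a)$ lie in $B$ by the very construction of the full crossed product, $\iota(x)=\iota(x)\,\widehat\iota(1_A)\in B$ for every $x\in C_0^u(\widehat\bG)$. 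The main potential obstacle lies in the very first step, namely producing the corepresentation $V$ on $H_\varphi$ that implements $\alpha$ and fixes $\xi_\varphi$: classically this is just the GNS-induced unitary representation, but in the LCQG setting it requires care with multipliers and possible non-unitality of $A$. This is well-trodden folklore in the area, and everything downstream is universal-property bookkeeping together with one application of the counit identity.
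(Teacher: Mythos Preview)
Your proposal is correct and follows essentially the same route as the paper: construct the GNS covariant pair $(\pi_\varphi,V)$ with invariant cyclic vector, tensor with a faithful $\bG$-representation, and use the counit identity $(\varepsilon\otimes\id)\Delta=\id$ on the invariant-vector slice to recover the faithful representation; the paper even flags the same ``potential obstacle'' you do and handles it by first passing to the unitization $\widetilde A$ and then invoking \cite[Proposition 2.4]{vs-impl}. The only cosmetic difference is your unital-case argument: you write $\iota(x)=\iota(x)\widehat\iota(1_A)\in B$ directly, whereas the paper phrases this via functoriality of $C_0^u(\widehat\bG)\ltimes_f -$ applied to $\bC\to A$; both amount to the same observation.
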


Along the way, we also record a few scattered results on full LCQG crossed products for which it seems difficult to locate proofs in the literature, at least in this specific setup:

\begin{itemize}
  
\item One phenomenon that seems to be familiar is the ``reciprocity'' of \Cref{subse:recipr}: morphisms $\bH\to \bG$ and their duals $\widehat{\bG}\to \widehat{\bH}$ induce isomorphic full crossed products.

  This is what \cite[Proposition 2.5]{qg-full} boils down to, for instance, for the identity morphism of a classical locally compact group, while for closed embeddings of LCQGs an implicit application of the principle can be read into the chain of isomorphisms following \cite[Remark 6.5]{vs-impr}. \Cref{pr:sameprod} records the common generalization:
  \begin{propositionN}
    Consider an LCQG morphism $\bH\to \bG$, with its induced action $\bH\circlearrowright C_0^u(\bG)$, and the dual morphism $\widehat{\bG}\to \widehat{\bH}$ with {\it its} induced action $\widehat{\bG}\circlearrowright C_0^u(\widehat{\bH})$.

    We then have an isomorphism
    \begin{equation*}
      C_0^u(\widehat{\bH})\ltimes_f C_0^u(\bG)\cong C_0^u(\bG)\ltimes_f C_0^u(\widehat{\bH}).
    \end{equation*}
    between the two resulting full crossed products.  \qedhere
  \end{propositionN}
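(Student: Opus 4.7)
The plan is to display both crossed products as the universal $C^*$-algebra corepresenting one and the same functor on $C^*$-algebras, and then appeal to the universal property. The natural vehicle is the bicharacter $V\in M(C_0^u(\widehat\bH)\otimes C_0^u(\bG))$ encoding the morphism $\bH\to\bG$: by construction the dual morphism $\widehat\bG\to\widehat\bH$ is encoded by the same $V$, and both translation actions $\alpha:\bH\circlearrowright C_0^u(\bG)$ and $\beta:\widehat\bG\circlearrowright C_0^u(\widehat\bH)$ are cut out of the coproducts on $\bG$ and $\widehat\bH$ by slicing $V$.

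First I would unwind the universal property of each side. A nondegenerate representation of $C_0^u(\widehat\bH)\ltimes_f C_0^u(\bG)$ in $M(B)$ is the same data as a covariant pair
$$
\sigma:C_0^u(\widehat\bH)\to M(B),\qquad \rho:C_0^u(\bG)\to M(B)
$$
of nondegenerate $*$-morphisms for $\alpha$. A nondegenerate representation of $C_0^u(\bG)\ltimes_f C_0^u(\widehat\bH)$ in $M(B)$ is a pair of morphisms of exactly the same type, now covariant for $\beta$. So the task reduces to showing the two covariance conditions coincide on any such pair $(\sigma,\rho)$.

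Second, I would rewrite both covariance relations as assertions about $V$. Since $\alpha$ acts by slicing $V$ against states on $C_0^u(\widehat\bH)$, its covariance translates into a Heisenberg-type commutation between $(\sigma\otimes\id)(V)$ and $\rho$. Symmetrically, $\beta$ acts by slicing $V$ against states on $C_0^u(\bG)$, so its covariance translates into a commutation between $(\id\otimes\rho)(V)$ and $\sigma$, which is the same identity rewritten. Both crossed products thus satisfy identical universal properties, and the desired isomorphism follows.

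The main obstacle is in the second step: the actions $\alpha$ and $\beta$ naturally have $V$ in mirror legs, and one has to check that the two resulting covariance identities are truly equal---rather than mirror images differing by a flip, a unitary antipode, or a left-versus-right convention---before the universal-property argument goes through. In the bicharacter framework (Meyer--Roy--Woronowicz) this equivalence is essentially built into the definitions, so once the two covariances are translated into that language the conclusion becomes formal.
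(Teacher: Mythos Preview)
Your approach is essentially the same as the paper's: both arguments identify each crossed product by its universal property (covariant pairs), translate the two covariance conditions into a single relation governed by the bicharacter attached to $\rho:\bH\to\bG$, and then observe that this relation is symmetric under the passage $\rho\leftrightarrow\widehat{\rho}$.

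The one place where the paper is more explicit than your sketch is precisely the step you flag as the ``main obstacle.'' Rather than keeping the pair of nondegenerate morphisms $(\sigma,\rho)$, the paper uses the universality of $C_0^u(\bG)$ to trade the algebra representation $\pi:C_0^u(\bG)\to B(\cH)$ for a second unitary $X_r\in M(K(\cH)\otimes C_0^u(\widehat{\bG}))$, so that the data becomes a pair of unitaries $(X_l,X_r)$. The covariance condition then evaluates, after applying both sides to the left leg of the universal unitary $U^u_{A}$, to a pentagon-type identity
\[
U^u_{\rho,13}\,X_{r,23}=X_{l,12}^{*}\,X_{r,23}\,X_{l,12}.
\]
The paper then shows concretely that applying $*$ followed by the tensor flip sends this identity to the same identity decorated with hats, i.e.\ the corresponding pentagon for $\widehat{\rho}$. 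So the symmetry you assert ``is essentially built into the definitions'' is exhibited by the paper as an explicit involution on the pentagon equation. Your outline is correct, but to close the argument you would still need to carry out this computation (or an equivalent one in your morphism language); it is short, but it is not quite a tautology.
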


\item On the other hand, it is a well-established classical result that full and reduced crossed products by amenable locally compact groups are canonically isomorphic \cite[Theorem 7.7.7]{ped-aut}.

  References to a quantum version appear in the literature \cite[Remarques A.13 (c)]{bs-cross} and again the isomorphism chain on \cite[p.340, bottom]{vs-impr}. There is a proof in \cite[Proposition 5.6]{blnch} for regular multiplicative unitaries which likely extends. We give an alternative proof below that directly uses the existence of a counit on the reduced function algebra $C_0(\bG)$.

  The modern linguistic conventions require that we work with {\it dual-coamenable} LCQGs rather than amenable ones (see \Cref{def:amnb}), but with that in mind, \Cref{th:samecrossed} says precisely what is expected:
  
  \begin{theoremN}
    For an action $\bG\circlearrowright A$ of a dual-coamenable LCQG the canonical surjection
    \begin{equation*}
      C_0^u(\widehat{\bG})\ltimes_f A\to C_0(\widehat{\bG})\ltimes_r A
    \end{equation*}
    is an isomorphism.  \qedhere
  \end{theoremN}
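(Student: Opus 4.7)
The plan is to mirror the classical argument---that full and reduced crossed products by an amenable group coincide---and transport it to the LCQG setting, with dual-coamenability playing the role of amenability. Dual-coamenability of $\bG$ is, by definition, coamenability of $\widehat{\bG}$: the canonical surjection $C_0^u(\widehat{\bG}) \twoheadrightarrow C_0(\widehat{\bG})$ is an isomorphism, and the common algebra carries a bounded counit $\varepsilon$. Dually, this furnishes a net of unit vectors $\xi_\alpha \in L^2(\bG)$ whose vector states converge weak-$*$ to $\varepsilon$, and along which the left regular multiplicative unitary $W$ becomes asymptotically trivial: $W(\eta \otimes \xi_\alpha) - \eta \otimes \xi_\alpha \to 0$ for every $\eta \in L^2(\bG)$.

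The canonical surjection $\pi \colon C_0^u(\widehat{\bG}) \ltimes_f A \twoheadrightarrow C_0(\widehat{\bG}) \ltimes_r A$ is surjective by construction, so injectivity reduces to a norm estimate: for any covariant pair $(\rho, U)$ defining a representation of the full crossed product on a Hilbert space $H$, I want its norm to be dominated by the reduced-crossed-product norm. Starting from $(\rho, U)$, I would form, via a Fell-type twist with $W$, a ``doubled'' covariant pair on $H \otimes L^2(\bG)$ of the schematic shape $(\rho \otimes 1,\, U_{12} W_{13})$. Covariance is preserved---this is the LCQG incarnation of Fell's absorption---and crucially the unitary leg of the new pair factors through $C_0(\widehat{\bG})$, so it defines a genuine representation of the reduced crossed product.

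To push the resulting norm inequality back down to $(\rho, U)$ itself, I would evaluate on unit vectors of the form $\eta \otimes \xi_\alpha$ drawn from the asymptotically invariant net above: the $W_{13}$-twist then becomes asymptotically trivial on such vectors, so the matrix coefficients of the doubled representation recover those of $(\rho, U)$ in the limit. This would yield $\|(\rho, U)(x)\|_{B(H)} \le \|\pi(x)\|_{\mathrm{red}}$ for every $x$ in the full crossed product, forcing $\pi$ to be isometric and hence an isomorphism.

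The main obstacle will be the last step---carefully marshaling the approximately invariant net against both $\rho(a)$ and $U(f)$ simultaneously, so that the inner products governing the operator norm on $H$ are genuinely recovered in the limit. This is where the LCQG bookkeeping (multiplicative unitaries, leg numbering, and the reconciliation of $C_0^u$- and $C_0$-level corepresentations via $\varepsilon$) must be handled with care; in particular one must verify that the absorption identity for $W$ interacts compatibly with the covariance relation, rather than only with the unitary leg in isolation. Once that estimate is secured, the desired isomorphism follows.
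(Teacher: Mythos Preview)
Your strategy is sound and will go through, but it is \emph{not} the route the paper takes. The paper avoids approximately invariant vectors entirely. Given a covariant pair $(X,\pi)$ on $\cH$, it defines
\[
\theta\colon M(K(L^2(\bG))\otimes A)\to B(L^2(\bG)\otimes\cH),\qquad x\otimes a\longmapsto X(x\otimes\pi(a))X^{*},
\]
restricts $\theta$ to the reduced crossed product, and computes that a typical generator $\rho(a)(y\otimes 1)$ (with $y\in C_0(\widehat{\bG})$) is sent to $(1\otimes\pi(a))\,X(y\otimes 1)X^{*}$. The key observation is that $y\mapsto X(y\otimes 1)X^{*}$ is $(\id\otimes\pi_X)$ applied to the flipped comultiplication of $C_0^u(\widehat{\bG})$ on $C_0(\widehat{\bG})$, so the image lands in $M(C_0(\widehat{\bG})\otimes K(\cH))$. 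Coamenability of $\widehat{\bG}$ then lets one apply the \emph{counit} $\varepsilon$ to the first leg: $\theta':=(\varepsilon\otimes\id)\theta$ is a representation of $C_0(\widehat{\bG})\ltimes_r A$ on $\cH$ itself, and a direct check shows it reproduces $(X,\pi)$ exactly. No nets, no limits---one algebraic slice by $\varepsilon$.

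Your argument instead uses the analytic avatar of the counit (the almost-invariant net $(\xi_\alpha)$) together with a Fell-absorption/norm-approximation scheme; this is closer in spirit to Blanchard's proof for regular multiplicative unitaries, which the paper explicitly cites as the approach it is \emph{not} following. What the paper's method buys is exactness and brevity; what yours buys is a transparent parallel with the classical amenable-group proof.

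One point in your outline needs tightening. The sentence ``the unitary leg of the new pair factors through $C_0(\widehat{\bG})$, so it defines a genuine representation of the reduced crossed product'' is not a valid inference: under dual-coamenability every unitary leg factors through $C_0(\widehat{\bG})=C_0^u(\widehat{\bG})$, yet that alone says nothing about the reduced crossed product (otherwise the theorem would be trivial). The real reason the doubled pair factors through $C_0(\widehat{\bG})\ltimes_r A$ is the full content of Fell absorption: conjugation by (a leg-flip of) $U$ carries your pair $(\pi\otimes 1,\ U\tpr W)$ to a regular-type pair $\big(W\otimes 1,\ (\pi_{\bG}\otimes\pi)\rho\big)$ as in \Cref{def:redcr}, and it is \emph{that} unitary equivalence which yields the factorization. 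Once you make this intertwiner explicit, the remainder of your plan (evaluating on $\eta\otimes\xi_\alpha$ and passing to the limit) goes through.
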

\end{itemize}

\subsection*{Acknowledgements}

This work is partially supported by NSF grant DMS-2001128.

I am grateful for numerous enlightening conversations with J.Crann, M.Kalantar, P.Kasprzak, A.Skalski, P.So{\l}tan, S.Vaes and A.Viselter on much of the material below and related matters. 

\section{Preliminaries}\label{se:prel}

For preparatory material on operator algebras not recalled explicitly below the reader can consult any number of good sources, such as \cite{blk,ped-aut,tak1}, \cite{dixc} in conjunction with \cite{dixw}, etc.

Throughout the paper,
\begin{itemize}
\item $M(\cdot)$ denotes the {\it multiplier algebra} construction \cite[\S II.7.3]{blk} and as in \cite[discussion preceding D\'efinition 0.1]{bs-cross} we set, for a $C^*$-algebra $A$ and an ideal $J\trianglelefteq A$,
  \begin{equation}\label{eq:maj}
    M(A;\ J):=\{x\in M(A)\ |\ xA + Ax\subseteq J\}\subseteq M(J).
  \end{equation}
  The latter inclusion is by means of the restriction map
  \begin{equation*}
    M(A)\ni x\mapsto x|_{J}\in M(J),
  \end{equation*}
  which, as observed in loc.cit., {\it embeds} $M(A;\ J)$ into $M(J)$.
\item $K(\cH)$ and $B(\cH)\cong M(K(\cH))$ \cite[Theorem 15.2.12]{wo} are the algebras of bounded and compact operators on a Hilbert space $\cH$ respectively.
\item The tensor-product symbol `$\otimes$' denotes whatever version of that concept is appropriate, depending on the objects it is placed between:
  \begin{itemize}
  \item the usual tensor product of Hilbert spaces \cite[\S I.1.4.2]{blk};
  \item the {\it minimal} or {\it spatial} tensor product \cite[\S II.9.1.3]{blk} of $C^*$-algebras;
  \item similarly for von Neumann algebras (the spatial tensor product of \cite[\S III.1.5.4]{blk}).
  \end{itemize}
\end{itemize}

For $C^*$-algebras $A$ and $B$ the {\it strict} maps $A\to M(B)$ are of particular interest, and pervasive in the literature on locally compact quantum groups. Recall (e.g. \cite[Definition 2.3.1]{wo}) that the {\it strict topology} on $M(A)$ is that induced by the seminorms
\begin{equation*}
  x\mapsto \|ax\|,\ x\mapsto \|xa\|,\ a\in A
\end{equation*}
and (\cite[\S\S II.7.3.13 and II.7.5.1]{blk} or \cite[Notations and conventions]{kvcast}) that linear maps $A\to M(B)$ are strict if they are norm-bounded and continuous on the unit ball with respect to the strict topologies of $A\subset M(A)$ and $M(B)$.

\begin{remark}
  In the broader context of {\it Hilbert modules} the term `strict topology' can be ambiguous \cite[\S II.7.2.9]{blk}, but the ambiguities vanish for $M(A)$ \cite[II.7.3.1]{blk}. The upshot is that as far as $M(A)$ goes, all of the various notions of strictness coincide: \cite[discussion preceding Proposition 1.3]{lnc-hilb}, \cite[\S II.7.3.11]{blk}, etc.
\end{remark}

For {\it $C^*$-morphisms} $f:A\to M(B)$, in addition to strictness, {\it non-degeneracy} is another property of interest (\cite[\S II.7.3.8]{blk} or \cite[discussion preceding Proposition 2.1]{lnc-hilb}): the requirement that
\begin{equation*}
  f(A)B:=\mathrm{span}\{f(a)b\ |\ a\in A,\ b\in B\}
\end{equation*}
be (norm-)dense in $B$. Non-degeneracy implies strictness \cite[discussion preceding Proposition 5.5]{lnc-hilb}. In fact, the two properties can be characterized in terms of maps between the two multiplier algebras:
\begin{itemize}
\item strictness is equivalent to the extensibility of $f$ to a $C^*$ morphism $\overline{f}:M(A)\to M(B)$ strictly continuous on bounded sets;
\item while non-degeneracy is equivalent to said extensibility, plus the condition that the extension $\overline{f}$ be unital \cite[Proposition 2.5]{lnc-hilb}.
\end{itemize}

\begin{remark}
  To gain a fuller picture of the issue of the (unique) extensibility of maps $f:A\to M(B)$ to $M(A)$, recall further that
  \begin{itemize}
  \item {\it all} strict linear maps in the sense above have such an extension $\overline{f}:M(A)\to M(B)$, strictly continuous on bounded sets \cite[Proposition 7.2]{kus-1p};
  \item and that furthermore there is no distinction between bounded-set strict continuity and just plain strict continuity \cite[Corollary 2.7]{taylor}, so the qualification need not be observed.
  \end{itemize}
\end{remark}

\begin{convention}
  So pervasive (e.g. \Cref{def:equiv}) is the assumption of non-degeneracy for a $C^*$-algebra representation
  \begin{equation*}
    A\to B(\cH)\cong M(K(\cH))
  \end{equation*}
  (in the multiplier-algebra sense, or, equivalently, meaning \cite[\S II.6.1.5]{blk} that $A$ does not annihilate any non-zero vectors in $\cH$), that it will be profitable to simply assume non-degeneracy whenever representations are mentioned, unless specified otherwise.
\end{convention}

\subsection{Locally compact quantum groups}\label{subse:lcqg}

Much background on locally compact quantum groups is assumed implicitly, with \cite{kvcast,kvwast,kus-univ} serving as the main references and a few others mentioned explicitly below. For our purposes, the fastest entry point to locally compact quantum groups is probably \cite[Definition 1.1]{kvwast}.

\begin{definition}\label{def:lcqg}
  A {\it locally compact quantum group} (abbreviated {\it LCQG}) $\bG$ consists of
  \begin{itemize}
  \item a von Neumann algebra $M$, denoted by $\li{\bG}$, equipped with a normal, unital $*$-algebra morphism
    \begin{equation*}
      \Delta=\Delta_{\bG}:M\to M\otimes M,
    \end{equation*}
    {\it coassociative} in the sense that $(\id\otimes\Delta)\Delta=(\Delta\otimes\id)\Delta$.
  \item a normal, semifinite, faithful (nsf, for short) weight \cite[Definition VII.1.1]{tak2} $\varphi$ on $M$ (the {\it left Haar weight of $\bG$}), that is {\it left-invariant} in the sense that
    \begin{equation*}
      \varphi((\omega\otimes\id)\Delta(x)) = \omega(1)\varphi(x)
    \end{equation*}
    for all $\omega\in M_*^+$ and all
    \begin{equation*}
      x\in \fm_{\varphi}^+:=\{x\in M^+\ |\ \varphi(x)<\infty\}. 
    \end{equation*}
  \item similarly, an nsf weight $\psi$ (the {\it right Haar weight of $\bG$}), right-invariant in the sense that  \begin{equation*}
      \psi((\id\otimes\omega)\Delta(x)) = \omega(1)\psi(x)
    \end{equation*}
    for all $\omega\in M_*^+$ and $x\in \fm_{\psi}$.
  \end{itemize}
\end{definition}

Apart from \Cref{def:lcqg}, a few common items attached to an LCQG $\bG$ that will appear frequently are
\begin{itemize}
\item $C_0(\bG)\li{\bG}$, the {\it reduced function $C^*$-algebra} of $\bG$: this is the object introduced in \cite[Definition 4.1]{kvcast}, and is the focus of \cite{kvcast}.
\item the {\it universal} version $C_0^u(\bG)$ of the previous object: the $A_u$ of \cite[\S 5]{kus-univ} (where $A=C_0(\bG)$); it comes equipped with a surjection $C_0^u(\bG)\to C_0(\bG)$.
\item the GNS construction\cite[\S I.2]{strat}
  \begin{equation*}
    (L^2(\bG),\ \pi_{\varphi},\ \Lambda_{\varphi}) = (\cH_{\varphi},\ \pi_{\varphi},\ \Lambda_{\varphi})
  \end{equation*}
  attached to the left Haar weight of $\bG$.
\item the {\it (Pontryagin-)dual} LCQG $\widehat{\bG}$ constructed in \cite[\S 8]{kvcast}; $\li{\widehat{\bG}}$ is also naturally realized as a von Neumann subalgebra of $B(L^2(\bG))$, so this single Hilbert space carries the entire structure.
\item the {\it multiplicative unitary}
  \begin{equation*}
    W=W^{\bG}\in M(C_0(\bG)\otimes C_0(\widehat{\bG}))\subset B(L^2(\bG)\otimes L^2(\bG))
  \end{equation*}
  of \cite[p.873]{kvcast} (see also \cite[p.913, top]{kvcast} for the multiplier-algebra-membership claim).
\end{itemize}

For a {\it morphism} $\rho:\bH\to \bG$ of LCQGs (a notion studied extensively in its many guises in \cite{mrw}) we write
\begin{itemize}
\item
  \begin{equation*}
    \rho^u:C_0^u(\bG)\to M(C_0^u(\bH))
  \end{equation*}
  for the corresponding morphism of universal function $C^*$-algebras \cite[Theorem 4.8]{mrw};
\item
  \begin{equation*}
    \rho_l:L^{\infty}(\bG)\to L^{\infty}(\bH)\otimes L^{\infty}(\bG)
  \end{equation*}
  for the incarnation of $\rho$ as a left $\bH$-action on $\bG$ (using the same symbol for the other versions of this map, such as universal or reduced $C^*$ rather than $W^*$-algebras) \cite[Theorem 5.5]{mrw};
\item similarly,
  \begin{equation*}
    \rho_r:L^{\infty}(\bG)\to L^{\infty}(\bG)\otimes L^{\infty}(\bH)
  \end{equation*}
  for the right-handed version \cite[Theorem 5.3]{mrw}.
\item
  \begin{equation*}
    U^u_{\rho}\in M(C_0^u(\bH)\otimes C_0^u(\widehat{\bG}))
  \end{equation*}
  for the universal {\it bicharacter} associated to $\rho$ \cite[\S 3]{mrw} (noting the difference in handedness conventions).
\item $\widehat{\rho}:\widehat{\bG}\to \widehat{\bH}$ for its {\it dual} \cite[Corollary 4.3]{mrw}.
\end{itemize}

A {\it closed embedding} $\iota:\bH\le \bG$ (realizing $\bH$ as a {\it closed quantum subgroup} of $\bG$) is a morphism whose dual $\widehat{\iota}$ is given by an embedding
\begin{equation*}
  \li{\widehat{\bH}}\subseteq \li{\widehat{\bG}}
\end{equation*}
intertwining the comultiplications $\Delta_{\widehat{\bH}}$ and $\Delta_{\widehat{\bG}}$. These are (\cite[Theorem 3.3]{dkss}), in other words, the closed quantum subgroups of \cite[Definition 2.5]{vs-impr} and the {\it closed quantum subgroups in the sense of Vaes} of \cite[Definition 3.1]{dkss}.

An LCQG is $\bG$ {\it classical} if $C_0(\bG)$ is commutative, and hence the algebra of functions vanishing at infinity on an ordinary locally compact group, and {\it dual-classical} if $\widehat{\bG}$ is classical. Other LCQG-specific notions (unitary representations, actions, crossed products, etc.) will be recalled below, as needed.

\section{Remarks on full crossed products}\label{se:fc} 

Crossed products by quantum-group actions are studied extensively in \cite{bs-cross} (see also \cite[\S 2.3]{vs-impr} for a brief refresher). We recall some of the concepts. First, following \cite[D\'efinition 0.3]{bs-cross}, \cite[Definition 2.3]{vs-impr}, \cite[Definition 2.1]{dsv}, etc.:

\begin{definition}\label{def:urep}
  A {\it unitary representation} of an LCQG $\bG$ on a Hilbert space $\cK$ is a unitary $U\in M(C_0(\bG)\otimes K(\cK))$ such that $(\Delta\otimes \id)U = U_{12}U_{13}$.

  Equivalently, it is enough to require that $U\in \li{\bG}\otimes B(\cK)$; see \cite[discussion following Definition 2.1]{dsv}, which in turn cites \cite[Theorem 4.12]{bds}.

  A unitary representation $U$ as above can also be recast as a non-degenerate $C^*$ morphism $\pi_U:C_0^u(\widehat{\bG})\to B(\cK)$; $U$ and $\pi_U$ determine each other uniquely \cite[Proposition 5.2]{kus-univ}: the bijective correspondence $U\leftrightarrow \pi_U$ is given by
  \begin{equation}\label{eq:upiu}
    U = (\id\otimes \pi_U)\wW,
  \end{equation}
  where $\wW\in M(C_0(\bG)\otimes C_0^u(\widehat{\bG}))$ is the right-half-universal multiplicative unitary denoted by $\widehat{\cV}$ in \cite[Proposition 4.2]{kus-univ}. 
\end{definition}

{\it Actions} of an LCQG on a $C^*$-algebra $A$, with a caveat (cf. \cite[D\'efinition 0.2]{bs-cross} and \Cref{re:vsact}), are the {\it continuous coactions} of \cite[Definition 2.6]{vs-impr}.

\begin{definition}\label{def:act}
  Let $\bG$ be an LCQG, $A$ a $C^*$-algebra, and $\widetilde{A}$ its {\it unitization} \cite[\S II.1.2]{blk}.

  An {\it action} of $\bG$ on $A$ is a non-degenerate $C^*$ morphism
  \begin{equation}\label{eq:act}
    \rho:A\to M(C_0(\bG)\otimes A)
  \end{equation}
  such that
  \begin{enumerate}[(a)]
  \item\label{item:12} $(\id\otimes\rho)\rho = (\Delta_{\bG}\otimes\id)\rho$;
  \item\label{item:13} $\rho$ takes values in the subalgebra
    \begin{equation*}
      M\left(C_0(\bG)\otimes \widetilde{A};\ C_0(\bG)\otimes A\right)\subseteq M(C_0(\bG)\otimes A)
    \end{equation*}
    defined as in \Cref{eq:maj}, so that
    \begin{equation*}
      \rho(A)(C_0(\bG)\otimes 1)\subseteq C_0(\bG)\otimes A;
    \end{equation*}
  \item\label{item:14} and $\rho(A)(C_0(\bG)\otimes 1)$ is dense in $C_0(\bG)\otimes A$.
  \end{enumerate}
  We occasionally depict actions as circular arrows, as in the Introduction: $\bG\circlearrowright A$ or $\rho:\bG\circlearrowright A$. 
\end{definition}

The caveat alluded to before is condition \Cref{item:13}; this is presumably intended in \cite[Definition 2.6]{vs-impr} (though not mentioned explicitly), since otherwise there is no reason, a priori, why $\alpha(B)(A\otimes 1)$ would be contained in $A\otimes B$.

As for unitary $\bG$-representations compatible with $\bG$-actions on $C^*$-algebras (see \cite[1.2. Exemples. (5)]{bs-cross} or \cite[p.325]{vs-impr}, where such representations are termed `covariant'):

\begin{definition}\label{def:equiv}
  For an action $\rho:A\to M(C_0(\bG)\otimes A)$ of an LCQG $\bG$ on a $C^*$-algebra $A$ a {\it $\rho$-equivariant (or $\bG$-equivariant) representation} on a Hilbert space $\cK$ is a pair $(U,\pi)$ where
  \begin{itemize}
  \item $U\in M(C_0(\bG)\otimes K(\cK))$ is a unitary $\bG$-representation;
  \item $\pi:A\to B(\cK)$ is a non-degenerate representation of the $C^*$-algebra on the same Hilbert space;
  \item and the {\it equivariance} condition
    \begin{equation}\label{eq:defequivariance}
      (\id\otimes\pi)\rho(a) = U^*(1\otimes \pi(a))U,\ \forall a\in A.
    \end{equation}
  \end{itemize}
  We regard `covariant' and `equivariant' (and similarly, `covariance' and `equivariance') as synonymous, in order to preserve agreement with the cited sources.

  We will also occasionally package the Hilbert space into the mix, in order to display the symbol denoting it; thus, an equivariant representation $(U,\pi)$ on $\cK$ might be depicted as $(U,\pi,\cK)$.
\end{definition}

One can now mimic the classical procedure (e.g. \cite[\S\S II.10.3.7, II.10.3.14]{blk}) of defining both {\it full} and {\it reduced crossed products}: see for instance
\begin{itemize}
\item \cite[\S 2.3]{vs-impr} (where the full version of the crossed product is defined implicitly, via its universality property);
\item \cite[D\'efinition 7.1]{bs-cross} for reduced crossed products and \cite[Remarques A.13 (b)]{bs-cross} for a mention that covariant representations can be used to define full crossed products;
\item \cite[D\'efinition 5.3]{blnch} in the context of regular multiplicative unitaries;  
\item which is then extended in \cite[D\'efinition 4.2]{vrgn-phd} in sufficient generality (the {\it weak Kac systems} of \cite[p.39]{vrgn-phd}).
\end{itemize}

\Cref{def:fullcr} below retraces \cite[Definition 2.3]{qg-full}. 

\begin{definition}\label{def:fullcr}
  Let $\rho:A\to M(C_0(\bG)\otimes A)$ be an action of an LCQG on a $C^*$-algebra.
  \begin{enumerate}[(a)]
  \item\label{item:8} For a $\rho$-equivariant representation $(U,\pi)$ on $\cK$ we define the $C^*$-algebra
    \begin{align*}
      C^*(U,\pi) &:= \left(\text{$C^*$-algebra generated by }\pi(A)\cdot \pi_U(C_0^u(\widehat{\bG}))\right)
      \numberthis\label{eq:cupi}\\
                 &=\left(\text{$C^*$-algebra generated by }\pi_U(C_0^u(\widehat{\bG}))\cdot \pi(A)\right)
                   \subseteq B(\cK),
    \end{align*}
    where $\pi_U:C_0^u(\widehat{\bG})\to B(\cK)$ is as in \Cref{def:urep}.
  \item\label{item:9} An equivariant representation $(U,\pi)$ {\it weakly contains} another, $(U',\pi',\cK')$, if there is a representation
    \begin{equation*}
      \psi:C^*(U,\pi)\to B(\cK')\text{ with }\psi\circ \pi_U = \pi_{U'}\text{ and }\psi\circ \pi = \pi'.
    \end{equation*}
  \item\label{item:10} A $\rho$-equivariant representation is {\it weakly universal} if it weakly contains every other equivariant representation.
  \item\label{item:11} We define the {\it full crossed product}
    \begin{equation*}
      C_0^u(\widehat{\bG})\ltimes_f A := C^*(U,\pi)
    \end{equation*}
    for a weakly universal covariant representation $(U,\pi)$. 
  \end{enumerate}
\end{definition}

For {\it reduced} crossed products, one can adopt either of two approaches. 

\begin{definition}\label{def:redcr}
  Let $\rho:A\to M(C_0(\bG)\otimes A)$ be an action of an LCQG on a $C^*$-algebra.
  \begin{itemize}
  \item First, per \cite[D\'efinition 7.1]{bs-cross} or \cite[p.324]{vs-impr}: the {\it reduced crossed product} attached to $\rho$ is
  \begin{align*}
    C_0(\widehat{\bG})\ltimes_r A &:= \left(\text{$C^*$-algebra generated by }\rho(A)\cdot (C_0(\widehat{\bG})\otimes \bC)\right)                                    
                                    \subseteq M(K(L^2(\bG))\otimes A). 
  \end{align*}
\item Equivalently, by analogy to the classical (or dual-classical) constructions in, say, \cite[\S\S II.10.3.14, II.3.10.14]{blk} or \cite[Definition 2.7]{qg-full}:
  \begin{enumerate}[(a)]
  \item For any representation $\sigma:A\to B(\cK)$ consider the $\rho$-equivariant representation
    \begin{equation*}
      (U_{\sigma},\ \widetilde{\sigma})
      :=
      (W\otimes 1,\ (\pi_{\bG}\otimes \sigma)\rho)
    \end{equation*}
    on $L^2(\bG)\otimes \cK$, where $W\in M(C_0(\bG)\otimes C_0(\widehat{\bG}))$ is the multiplicative unitary of $\bG$ and $\pi_{\bG}:C_0(\bG)\to B(L^2(\bG))$ is the GNS representation of the left Haar weight of $\bG$.
  \item The {\it reduced crossed product} attached to $\rho$ is
    \begin{equation*}
      C_0(\widehat{\bG})\ltimes_r A
      :=
      C^*(U_{\sigma},\ \widetilde{\sigma})
    \end{equation*}
    as in \Cref{def:fullcr} \Cref{item:8} for a {\it faithful} representation $\sigma:A\to B(\cK)$.
  \end{enumerate}
  \end{itemize}
  It is not difficult to see that in the second definition the resulting $C^*$-algebra does not depend on $\sigma$ (so long as the latter is faithful) and that the two definitions are indeed equivalent.
\end{definition}

Given an action \Cref{eq:act} and an equivariant representation $(U,\pi,\cK)$ one expects the corresponding algebra $C^*(U,\pi)$ of \Cref{eq:cupi} to come equipped with non-degenerate morphisms
\begin{equation}\label{eq:wantnd}
  C_0^u(\bG) \to M(C^*(U,\pi))\quad\text{and}\quad A \to M(C^*(U,\pi)).
\end{equation}

This is indeed the case according to \cite[Lemme 4.1]{vrgn-phd}, whose first two items we paraphrase as follows.

\begin{proposition}\label{pr:havendeg}
  Let $(U,\pi,\cK)$ be a $\rho$-equivariant representation for an action $\rho:\bG\circlearrowright A$.
  \begin{enumerate}[(1)]
  \item The $C^*$-algebra $C^*(U,\pi)$ of \Cref{eq:cupi} can be recovered as
    \begin{align*}
      C^*(U,\pi) &= \overline{\pi_U(C_0^u(\widehat{\bG}))\cdot \pi(A)}^{\|\cdot\|}\\
                 &= \overline{\pi(A)\cdot \pi_U(C_0^u(\widehat{\bG}))}^{\|\cdot\|},
    \end{align*}
    i.e. these linear spans are already closed under multiplication and `$*$'. 
  \item The non-degenerate morphisms
    \begin{equation*}
      \pi_U:C_0^u(\bG) \to B(\cK)\quad\text{and}\quad \pi:A \to B(\cK)
    \end{equation*}
    factor through non-degenerate morphisms \Cref{eq:wantnd}.  \qedhere
  \end{enumerate}  
\end{proposition}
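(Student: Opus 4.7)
The key input is the equivariance relation \Cref{eq:defequivariance}, which I would use in the form $(1\otimes\pi(a))U = U(\id\otimes\pi)\rho(a)$, combined with the density axiom \Cref{item:14} (``$\rho(A)(C_0(\bG)\otimes 1)$ is dense in $C_0(\bG)\otimes A$''). The plan is to show that each of the two candidate one-sided products appearing in \Cref{eq:cupi} is contained in the norm-closure of the other; granted this, the common closure is a $*$-subalgebra closed in norm, so is precisely $C^*(U,\pi)$.

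For one of the two inclusions I would fix $a\in A$ and $f\in C_0(\bG)$, and approximate the element $\rho(a)(f\otimes 1)\in C_0(\bG)\otimes A$ (which lives in $C_0(\bG)\otimes A$ thanks to condition \Cref{item:13}) in norm by finite sums $\sum_i g_i\otimes b_i$. The equivariance identity then yields
\begin{equation*}
(1\otimes\pi(a))\,U(f\otimes 1) \;=\; U(\id\otimes\pi)\bigl(\rho(a)(f\otimes 1)\bigr) \;\approx\; \sum_i U(g_i\otimes 1)\bigl(1\otimes\pi(b_i)\bigr).
\end{equation*}
Applying a slice $(\mu\otimes\id)$ with $\mu\in L^1(\bG)$ and using $U=(\id\otimes\pi_U)\wW$, the two sides reduce, respectively, to
\begin{equation*}
\pi(a)\cdot\pi_U\bigl((\mu_f\otimes\id)\wW\bigr)\qquad\text{and}\qquad \sum_i \pi_U\bigl((\mu_{g_i}\otimes\id)\wW\bigr)\cdot\pi(b_i),
\end{equation*}
where $\mu_g\in L^1(\bG)$ denotes the functional $x\mapsto \mu(xg)$. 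As $\mu$ and $f$ vary, the slices $(\mu_f\otimes\id)\wW$ span a norm-dense subspace of $C_0^u(\widehat{\bG})$ (a standard consequence of how $\wW$ and $C_0^u(\widehat{\bG})$ are set up in \cite[\S 5]{kus-univ}). This gives
\begin{equation*}
\pi(A)\cdot \pi_U(C_0^u(\widehat{\bG})) \;\subseteq\; \overline{\pi_U(C_0^u(\widehat{\bG}))\cdot\pi(A)}^{\|\cdot\|},
\end{equation*}
and taking $*$-adjoints of this containment (which swaps the two products, since both $A$ and $C_0^u(\widehat{\bG})$ are $*$-closed) delivers the reverse inclusion, settling part (1).

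For part (2), the two equivalent descriptions of $C^*(U,\pi)$ from (1) make it immediate that $\pi_U(C_0^u(\widehat{\bG}))$ and $\pi(A)$ each stabilize $C^*(U,\pi)$ under both left and right multiplication, so $\pi_U$ and $\pi$ factor through $*$-homomorphisms into $M(C^*(U,\pi))$. Non-degeneracy, i.e.\ the density of $\pi_U(C_0^u(\widehat{\bG}))\cdot C^*(U,\pi)$ in $C^*(U,\pi)$ and similarly for $\pi(A)$, follows from Cohen--Hewitt factorization applied to the $C^*$-algebras $C_0^u(\widehat{\bG})$ and $A$ together with the dense subspace $\pi_U(C_0^u(\widehat{\bG}))\pi(A)\subseteq C^*(U,\pi)$ furnished by (1).

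The main obstacle I anticipate is the bookkeeping around the multiplier-algebra and strict-continuity subtleties of the slicing step, plus citing cleanly the density of slices of the right-half-universal $\wW$ in $C_0^u(\widehat{\bG})$ (as opposed to the more familiar reduced statement for $W$ and $C_0(\widehat{\bG})$). Both are essentially folklore but require care to invoke.
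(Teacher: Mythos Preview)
Your proposal is correct and matches the approach the paper points to. Note that the paper does not actually prove \Cref{pr:havendeg}: it quotes it from \cite[Lemme 4.1]{vrgn-phd} (with earlier variants in \cite{lprs,bs-cross,blnch}) and only records the one key idea, namely that ``the proofs all ultimately revolve around the same idea of factoring an arbitrary functional $\omega\in B(L^2(\bG))_*$ as $\omega' x$ for some $\omega'\in B(L^2(\bG))_*$ and $x\in C_0(\bG)$,'' via Cohen--Hewitt. That is precisely your slicing step: the density of $\{(\mu_f\otimes\id)\wW : \mu\in L^1(\bG),\ f\in C_0(\bG)\}$ in $C_0^u(\widehat{\bG})$ is not merely ``how $\wW$ and $C_0^u(\widehat{\bG})$ are set up'' but needs the factorization $\omega=\mu_f$ for arbitrary $\omega\in L^1(\bG)$, which is Cohen applied to the $C_0(\bG)$-module $L^1(\bG)$. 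You flag this as a bookkeeping obstacle at the end; in fact it is the crux, and the paper singles it out as such. Your use of Cohen in part~(2) is by contrast incidental (an approximate identity already suffices there).
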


\begin{remark}
  As noted in \cite[\S 2.3]{vrgn-phd}, that paper's weak-Kac-system formalism applies to locally compact quantum groups formalized as in \cite{kvcast}; we will thus apply results in \cite{vrgn-phd} freely, as needed.
\end{remark}

Other, earlier versions of the result appear as \cite[Lemma 2.5]{lprs}, \cite[Lemme 7.2 and following discussion]{bs-cross} and \cite[Proposition 5.2]{blnch}; the proofs all ultimately revolve around the same idea of factoring an arbitrary functional $\omega\in B(L^2(\bG))$ as $\omega' x$ for some
\begin{equation*}
  \omega'\in B(L^2(\bG))_*\quad\text{and}\quad x\in C_0(\bG);
\end{equation*}
this is a consequence, say, of the {\it Cohen factorization theorem} \cite[p.750, Theorem]{lprs} or \cite[Theorem 32.22]{hr2}.

\begin{remark}\label{re:vsact}
  In particular, applying \Cref{pr:havendeg} to a weakly universal equivariant representation in the sense of \Cref{def:fullcr} \Cref{item:10} we obtain the universal unitary
  \begin{equation*}
    U_u\in M(C_0(\bG)\otimes C_0^u(\widehat{\bG})\ltimes_f A)
  \end{equation*}
  and non-degenerate representation
  \begin{equation*}
    \pi_u:A\to M(C_0^u(\widehat{\bG})\ltimes_f A)
  \end{equation*}
  denoted on \cite[p.325]{vs-impr} (where $U_u$ is denoted by $X_u$).

  Note, though, that the proofs implying the existence of these two objects rely crucially on actions \Cref{eq:act} actually taking values in $M\left(C_0(\bG)\otimes \widetilde{A};\ C_0(\bG)\otimes A\right)$. The same constraint is imposed in \cite[Definition 2.1]{qg-full} and \cite[Definition 2.1]{lprs} in the context of dual-classical LCQGs, both sources using the notation
  \begin{equation*}
    \widetilde{M}(A\otimes B) := M(\widetilde{A}\otimes B;\ A\otimes B),
  \end{equation*}
  with the latter identified with a $C^*$-subalgebra of $M(A\otimes B)$. \cite[\S 1]{lprs}, in particular, contains an illuminating discussion of the advantages of $\widetilde{M}$ over $M$.
\end{remark}

\subsection{Full-crossed-product reciprocity}\label{subse:recipr}

The following remark, for closed embeddings, is implicit in the computation on \cite[p.340, bottom]{vs-impr}. 

\begin{proposition}\label{pr:sameprod}
  Let $\rho:\bH\to \bG$ be an LCQG morphism. We have the following isomorphism between the universal crossed products built out of the left actions induced by $\rho$ and its dual $\widehat{\rho}:\widehat{\bG}\to \widehat{\bH}$:
  \begin{equation*}
    C_0^u(\widehat{\bH})\ltimes_f C_0^u(\bG)\cong C_0^u(\bG)\ltimes_f C_0^u(\widehat{\bH}).
  \end{equation*}
\end{proposition}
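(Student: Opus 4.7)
The plan is to exhibit both full crossed products as solutions to the same universal problem. In broad strokes, each classifies pairs of non-degenerate $C^*$-morphisms
\[
\pi_1 : C_0^u(\widehat{\bH})\to B(\cK),\qquad \pi_2: C_0^u(\bG)\to B(\cK)
\]
on a common Hilbert space, constrained by a single compatibility relation phrased via the universal bicharacter $U^u_\rho\in M(C_0^u(\bH)\otimes C_0^u(\widehat{\bG}))$ associated to $\rho$.

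\textbf{Step 1: unwinding the input data.} For the left-hand side, \Cref{def:equiv} describes a covariant pair for $\rho_l:C_0^u(\bG)\to M(C_0^u(\bH)\otimes C_0^u(\bG))$ as a unitary $\bH$-representation $V$ on $\cK$---equivalently, by \Cref{def:urep}, a non-degenerate morphism $\pi_V:C_0^u(\widehat{\bH})\to B(\cK)$---together with a non-degenerate representation $\pi:C_0^u(\bG)\to B(\cK)$ satisfying \Cref{eq:defequivariance}. For the right-hand side, the same analysis applied to $\widehat{\rho}_l:C_0^u(\widehat{\bH})\to M(C_0^u(\widehat{\bG})\otimes C_0^u(\widehat{\bH}))$ yields a unitary $\widehat{\bG}$-representation---equivalently a non-degenerate morphism $\pi_W:C_0^u(\bG)\to B(\cK)$---together with a non-degenerate morphism $\pi':C_0^u(\widehat{\bH})\to B(\cK)$. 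Relabelling gives the common shape announced above, so that prior to the covariance conditions both sides consume identical data.

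\textbf{Step 2: matching the equivariances.} The crux is that both equivariance conditions reduce to the \emph{same} relation on the pair $(\pi_V,\pi_W)$ once one substitutes the bicharacter description of $\rho_l$ and $\widehat{\rho}_l$ from \cite[Theorems~5.3 and~5.5]{mrw}. Writing $V=(\id\otimes\pi_V)\wW^{\bH}$ (so that $V$ is recovered from $\pi_V$ via \Cref{eq:upiu}) and extracting the $\bG$ unitary out of $\pi_W$ via the right-half-universal multiplicative unitary of $\widehat{\bG}$, the defining relation $(\id\otimes\pi_W)\rho_l(a)=V^*(1\otimes \pi_W(a))V$ becomes a commutation relation between two concrete unitaries built from $\pi_V\otimes\pi_W$ applied to (slices of) the universal bicharacter $U^u_\rho$. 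The analogous manipulation on the dual side yields literally the same relation, since the bicharacter of $\widehat{\rho}$ is the image of $U^u_\rho$ under the flip identification $M(C_0^u(\bH)\otimes C_0^u(\widehat{\bG}))\cong M(C_0^u(\widehat{\widehat{\bG}})\otimes C_0^u(\widehat{\bH}))$.

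\textbf{Step 3: applying universality.} Both full crossed products are characterized, via \Cref{def:fullcr}~\Cref{item:10}--\Cref{item:11} and \Cref{pr:havendeg}, as universal $C^*$-algebras generated by non-degenerate copies of $C_0^u(\widehat{\bH})$ and $C_0^u(\bG)$ subject to their respective covariance relations. Since Steps~1 and~2 show that the two universal properties coincide, the maps sending generators to generators extend to mutually inverse $C^*$-isomorphisms.

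The hard part is Step~2: it is essentially a bookkeeping exercise with the various half-universal multiplicative unitaries of \cite{kus-univ}, but one must be careful about handedness conventions for bicharacters, the duality $\widehat{\widehat{\bG}}=\bG$, and the precise relation between $U^u_\rho$ and $U^u_{\widehat{\rho}}$ as recorded in \cite{mrw}.
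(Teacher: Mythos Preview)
Your strategy is the same as the paper's: show that covariant representations of the two crossed products are classified by identical data, namely a pair of non-degenerate morphisms from $C_0^u(\widehat{\bH})$ and $C_0^u(\bG)$ into $B(\cK)$ subject to a single relation coming from the bicharacter $U^u_\rho$, and then invoke universality. Step~1 and Step~3 are fine and match the paper.

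The issue is that Step~2 is asserted rather than carried out. You say the two covariance conditions ``reduce to the same relation'' and that this is ``essentially a bookkeeping exercise,'' but you never write down the relation or verify the reduction. This is precisely where the content of the proof lives. The paper does the computation explicitly: it converts $\pi:C_0^u(\bG)\to B(\cH)$ into a unitary $X_r\in M(K(\cH)\otimes C_0^u(\widehat{\bG}))$, applies both sides of the covariance condition \Cref{eq:defequivariance} to the left leg of the universal unitary $U^u_A\in M(C_0^u(\bG)\otimes C_0^u(\widehat{\bG}))$, and obtains a concrete pentagon-type equation
\[
U^u_{\rho\,13}\,X_{r\,23} \;=\; X_{l\,12}^*\,X_{r\,23}\,X_{l\,12}
\]
in $M(C_0^u(\bH)\otimes K(\cH)\otimes C_0^u(\widehat{\bG}))$. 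The key observation is then that applying the $*$ operation together with the flip of the outer tensorands sends this equation to its own hatted version, i.e.\ to the analogous pentagon for $\widehat{\rho}$, using that $U^u_{\widehat{\rho}}=\mathrm{flip}((U^u_\rho)^*)$. Without actually deriving this equation and checking its self-duality under flip-and-$*$, your Step~2 is a statement of intent rather than an argument; the ``handedness'' and ``duality'' pitfalls you flag are exactly the places where a reader cannot take the claim on faith.
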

\begin{proof}
  Denote by $B^u$ and $A^u$ the universal $C^*$-algebras attached to $\bH$ and $\bG$ respectively, and similarly for duals (with $\widehat{\bullet}$ indicating duality); the target isomorphism then becomes
  \begin{equation*}
    \widehat{B}^u\ltimes_f A^u\cong A^u\ltimes_f \widehat{B}^u.
  \end{equation*}
  According to \Cref{def:fullcr}, a $\widehat{B}^u\ltimes_f A^u$-representation on a Hilbert space $\cH$ consists of
  \begin{itemize}
  \item A representation $\pi:A^u\to B(\cH)$ of the $C^*$-algebra $A^u$ on $H$;
  \item and a unitary representation
    \begin{equation}\label{eq:x}
      X\in M(B^u\otimes K(\cH))
    \end{equation}
    of the quantum group $B$;
  \item such that
    \begin{equation}\label{eq:cov}
      (\id\otimes \pi)\rho_l(x) = X^*(1\otimes \pi(x))X\in ,\ \forall x\in A^u. 
    \end{equation}
  \end{itemize}
  We write $X_l$ for $X$, to highlight the left-hand placement of the $A$-leg in \Cref{eq:x} (there will be an $X_r$ shortly). 

  Now, the universality property of $A^u$ provides \cite[Proposition 5.3 and/or dual to Proposition 6.5]{kus-univ} a bijection between $\pi:A^u\to B(\cH)$ as above and unitaries
  \begin{equation*}
    X_r\in M(K(\cH)\otimes \widehat{A}^u)
  \end{equation*}
  (satisfying appropriate conditions).

  Consider the universal unitary $U^u=U^u_A\in M(A^u\otimes \widehat{A}^u)$ of \cite[Proposition 6.4]{kus-univ}. Applying the two sides of \Cref{eq:cov} to its left leg and using the identity
  \begin{equation*}
    (\Delta\otimes\id)U = U_{13}U_{23},
  \end{equation*}
  \Cref{eq:cov} translates to 
  \begin{equation}\label{eq:pentagon}
    U_{\rho 13}X_{r23} = X^*_{l12}X_{r23} X_{l12},    
  \end{equation}
  where
  \begin{equation*}
    U^u_{\rho} = (\rho^u\otimes\id)U^u_A\in M(B^u\otimes \widehat{A}^u)
  \end{equation*}
  is the bicharacter associated to $\rho$. Moving two factors around, this is equivalent to
  \begin{equation}\label{eq:movedpent}
    X_{l12}U^u_{\rho 13} = X_{r23} X_{l12} X^*_{r23}.
  \end{equation}
  Now,
  \begin{itemize}
  \item applying the $*$ operation to \Cref{eq:movedpent};
  \item and also reversing the tensorands
  \end{itemize}
  transforms that equation into
  \begin{equation}\label{eq:dualpent}
    \widehat{U^u_{\rho}}_{13}\widehat{X}_{r23} = X^*_{l12}\widehat{X}_{r23} \widehat{X}_{l12},
  \end{equation}
  where
  \begin{itemize}
  \item
    \begin{equation*}
      \widehat{U^u_{\rho}} := \text{flip}(U^{u*}_{\rho}) = U^u_{\widehat{\rho}}\in M(\widehat{A}^u\otimes B^u)
    \end{equation*}
    is the universal bicharacter attached to the dual quantum-group morphism $\widehat{\rho}:\widehat{\bG}\to \widehat{\bH}$ \cite[Proposition 3.9]{mrw};
  \item and similarly,
    \begin{equation*}
      \widehat{X}_l := \text{flip}(X^*_r)\in M(\widehat{A}^u\otimes K(\cH));
    \end{equation*}
  \item and
    \begin{equation*}
      \widehat{X}_r := \text{flip}(X^*_l)\in M(K(\cH)\otimes A^u).
    \end{equation*}
  \end{itemize}
  Note, though, that \Cref{eq:dualpent} is precisely \Cref{eq:pentagon} decorated with hats; in other words, specifying a pair $(X_l,X_r)$ is the same as specifying a pair $(\widehat{X}_l,\widehat{X}_r)$, playing the same role for the dual group morphism $\widehat{\rho}$. This means that the representations of $\widehat{B}^u\ltimes_f A^u$ and $A^u\ltimes_f \widehat{B}^u$ are classified by the same data, and we are done.
\end{proof}

\begin{remark}
  \cite[Proposition 2.5]{qg-full} is, essentially, \Cref{pr:sameprod} applied to the identity morphism on a classical locally compact group.
\end{remark}

\subsection{Coamenability and full crossed products}\label{subse:coam}

\Cref{th:samecrossed} below is, presumably, well known:
\begin{itemize}
\item it is a straightforward quantum generalization of the fact that full and reduced crossed products by amenable locally compact groups coincide \cite[Theorem 7.7.7]{ped-aut};
\item it is invoked implicitly in \cite[paragraph following Remark 6.5]{vs-impr};
\item claimed explicitly in \cite[Remarques A.13 (c)]{bs-cross} in the setting of regular LCQGs;
\item proven in \cite[Proposition 5.6]{blnch} for {\it regular} multiplicative unitaries (and hence locally compact quantum groups);
\item and presumably the proof extends generally, as the statement \cite[D\'efinition]{vrgn-phd} seems to imply.
\end{itemize}

The earlier sources refer to {\it amenable} locally compact quantum groups (`moyennable' in \cite[Remarques A.13 (c)]{bs-cross}). To preserve agreement with the language of \cite{bt} (now in wide use), the term adopted below would is `dual-coamenable', i.e. an LCQG $\bG$ whose dual $\widehat{\bG}$ is coamenable in the following sense (\cite[Definition 3.1, Theorem 3.1]{bt}).

\begin{definition}\label{def:amnb}
  An LCQG $\bG$ is {\it coamenable} if either of the two following equivalent conditions holds
  \begin{itemize}
  \item The surjection $C_0^u(\bG)\to C_0(\bG)$ is an isomorphism.
  \item The reduced function algebra $C_0(\bG)$ has a {\it counit}: a $C^*$-morphism $\varepsilon:C_0(\bG)\to \bC$ such that $(\id\otimes\varepsilon)\Delta_{\bG}=\id$.
  \end{itemize}

  $\bG$ is {\it dual-coamenable} if $\widehat{\bG}$ is coamenable.
\end{definition}

The proof of \Cref{th:samecrossed} below proceeds along the same lines as that of the dual-classical \cite[Theorem 3.7]{lprs}, directly using the counit (rather than alternative characterizations of coamenability, as in \cite[Proposition 5.6]{blnch}).

\begin{theorem}\label{th:samecrossed}
  For an action $\rho:A\to M(C_0(\bG)\otimes A)$ of a dual-coamenable locally compact quantum group the canonical morphism
  \begin{equation*}
    C_0^u(\widehat{\bG})\ltimes_f A\to C_0(\widehat{\bG})\ltimes_r A
  \end{equation*}
  is an isomorphism.
\end{theorem}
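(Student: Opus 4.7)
The plan is to construct a left inverse to
\[
  \Pi:\, C_0^u(\widehat{\bG})\ltimes_f A\to C_0(\widehat{\bG})\ltimes_r A
\]
using the counit $\varepsilon:C_0(\widehat{\bG})\to\bC$ of the coamenable LCQG $\widehat{\bG}$ (\Cref{def:amnb}); this mirrors the dual-classical argument in \cite[Theorem 3.7]{lprs}, working directly with $\varepsilon$ rather than going through auxiliary characterizations of coamenability as in \cite[Proposition 5.6]{blnch}. Since coamenability of $\widehat{\bG}$ already gives $C_0^u(\widehat{\bG})\cong C_0(\widehat{\bG})$, there is no ambiguity in viewing $\varepsilon$ as a character on either.

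First I would fix a weakly universal $\rho$-equivariant representation $(U_u,\pi_u,\cH_u)$ so that $C_0^u(\widehat{\bG})\ltimes_f A=\overline{\pi_u(A)\cdot\pi_{U_u}(C_0^u(\widehat{\bG}))}\subseteq B(\cH_u)$ by \Cref{pr:havendeg}. Since $\pi_u$ is faithful on $A$ (being the $A$-leg of a faithful representation of the full crossed product), I would then apply the second description of \Cref{def:redcr} with $\sigma=\pi_u$ to realize $C_0(\widehat{\bG})\ltimes_r A$ on $L^2(\bG)\otimes \cH_u$ via the amplified equivariant pair $(W\otimes 1,\,(\pi_\bG\otimes\pi_u)\rho)$. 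Under these identifications $\Pi$ sends each product $\pi_u(a)\pi_{U_u}(x)$ to $(\pi_\bG\otimes\pi_u)\rho(a)\cdot(\pi_{\mathrm{reg}}(x)\otimes 1)$, where $\pi_{\mathrm{reg}}:C_0(\widehat{\bG})\hookrightarrow B(L^2(\bG))$ is the defining inclusion.

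The key computation is that the bicharacter identity $(\id\otimes\varepsilon)W=1_{C_0(\bG)}$, which holds because $W$ is a bicharacter and $\varepsilon$ is the counit of $\widehat{\bG}$, together with the covariance \Cref{eq:defequivariance} of $\rho$, allow one to ``slice away'' the amplifying $L^2(\bG)$-leg and recover each generating product $\pi_u(a)\pi_{U_u}(x)\in B(\cH_u)$ from its image $(\pi_\bG\otimes\pi_u)\rho(a)(\pi_{\mathrm{reg}}(x)\otimes 1)\in B(L^2(\bG)\otimes \cH_u)$ via a formal slice $E=(\varepsilon\otimes\id)$. This would yield $E\circ\Pi=\id$ on a dense subalgebra of $C_0^u(\widehat{\bG})\ltimes_f A$, giving injectivity of $\Pi$.

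The main obstacle is boundedness of $E$: since $\varepsilon$ is generally not normal on $B(L^2(\bG))$, the slice $(\varepsilon\otimes\id)$ is not automatically a completely bounded map on all of $B(L^2(\bG)\otimes\cH_u)$. This is precisely where coamenability pays off: one approximates $\varepsilon$ by a net of normal vector states on $C_0(\widehat{\bG})\subseteq B(L^2(\bG))$ (an equivalent formulation of coamenability, cf.\ \cite{bt}), each of which does give a completely bounded slice; passing to the limit on the dense subalgebra of products $(\pi_\bG\otimes\pi_u)\rho(a)\cdot(\pi_{\mathrm{reg}}(x)\otimes 1)$ then produces a genuine bounded $E$ with the desired left-inverse property. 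Carrying out this limiting passage carefully, without loss of boundedness, is the essential technical step.
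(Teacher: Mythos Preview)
Your overall strategy---use the counit $\varepsilon$ on $C_0(\widehat{\bG})$ to slice away the $L^2(\bG)$-leg and recover an arbitrary covariant pair from the reduced crossed product---is exactly the paper's, and you correctly identify the obstacle: the first leg of a generic element $(\pi_{\bG}\otimes\pi_u)\rho(a)\cdot(x\otimes 1)$ lives in the $C^*$-algebra generated by \emph{both} $C_0(\bG)$ and $C_0(\widehat{\bG})$ inside $B(L^2(\bG))$, so $\varepsilon$ is not defined on it. Your proposed fix (approximate $\varepsilon$ by normal vector states $\omega_i$) does not close this gap: the $\omega_i$ approximate $\varepsilon$ only on $C_0(\widehat{\bG})$, and there is no reason their behaviour on the $C_0(\bG)$-content of the first leg should converge to anything useful. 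A uniform bound on the slices $(\omega_i\otimes\id)$ is not the issue---each is contractive---the issue is that the \emph{limit} need not exist on the required dense span, let alone equal $\pi_u(a)\pi_{U_u}(x)$.

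The paper avoids approximation entirely by a conjugation trick you are missing. Given a covariant pair $(X,\pi,\cH)$, define
\[
  \theta:M(K(L^2(\bG))\otimes A)\to B(L^2(\bG)\otimes\cH),\quad \theta(y)=X\,(\id\otimes\pi)(y)\,X^*.
\]
Covariance gives $X(\id\otimes\pi)\rho(a)X^*=1\otimes\pi(a)$, so the $C_0(\bG)$-part of $\rho(a)$ disappears; and since $X=(\id\otimes\pi_X)\wW$, the map $x\mapsto X(x\otimes 1)X^*$ is the (half-universal) comultiplication on $C_0(\widehat{\bG})$ followed by $\id\otimes\pi_X$, hence lands in $M(C_0(\widehat{\bG})\otimes K(\cH))$. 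Thus $\theta$ restricted to the reduced crossed product takes values in $M(C_0(\widehat{\bG})\otimes K(\cH))$, where $(\varepsilon\otimes\id)$ is a bona fide non-degenerate $C^*$-morphism. Setting $\theta'=(\varepsilon\otimes\id)\theta$ gives the representation of $C_0(\widehat{\bG})\ltimes_r A$ on $\cH$ that recovers $(X,\pi)$; no limiting argument is needed. The missing idea in your proposal is precisely this preliminary conjugation by $X$.
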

\begin{proof}
  We have to show that every pair consisting of
  \begin{itemize}
  \item a representation
    \begin{equation}\label{eq:pi}
      \pi:A\to B(\cH);
    \end{equation}
  \item and a $\bG$-representation
    \begin{equation}\label{eq:X}
      X\in M(C_0(\bG)\otimes K(\cH));
    \end{equation}
  \item satisfying the equivariance condition
    \begin{equation}\label{eq:covagain}
      (\id\otimes \pi)\rho(a) = X^*(1\otimes \pi(a))X\in ,\ \forall a\in A
    \end{equation}
  \end{itemize}
  arises from a representation of the {\it reduced} crossed product $C_0(\widehat{\bG})\ltimes_r A$ on the same Hilbert space $\cH$.

  Consider the representation
  \begin{equation*}
    M(K(L^2(\bG))\otimes A)\ni x\otimes a\stackrel{\theta}{\longmapsto} X(x\otimes \pi(a))X^* \in B(L^2(\bG)\otimes \cH),
  \end{equation*}
  and restrict it to the reduced crossed product (\cite[\S 2.3]{vs-impr})
  \begin{equation}\label{eq:redcp}
    C_0(\widehat{\bG})\ltimes_r A = \overline{\mathrm{span}}\{\rho(a)(x\otimes 1)\ |\ x\in C_0(\widehat{\bG}),\ a\in A\}\subset M(K(L^2(\bG))\otimes A).
  \end{equation}
  Next, apply $\theta$ to the typical element $\rho(a)(x\otimes 1)$ displayed in \Cref{eq:redcp}, obtaining
  \begin{equation}\label{eq:applytheta}
    X(\id\otimes\pi)\rho(a) X^* X(x\otimes 1)X^* = (1\otimes\pi(a))X(x\otimes 1)X^*\in B(L^2(\bG)\otimes \cH),
  \end{equation}
  via the equivariance condition \Cref{eq:covagain}. 

  Denote by $\pi_X:C_0^u(\widehat{\bG})\to B(\cH)$ attached to $X$ and by
  \begin{equation*}
    U\in M(C_0(\bG)\otimes C_0^u(\widehat{\bG}))
  \end{equation*}
  the ``half-universal'' multiplicative unitary associated to $\bG$ (the element $\widehat{\cV}$ of \cite[p.307]{kus-univ}). Then:
  \begin{itemize}
  \item we have
    \begin{equation*}
      X=(\id\otimes \pi_X)U
    \end{equation*}
    by \cite[Proposition 5.2]{kus-univ};
  \item and in general, even without the coamenability assumption,
    \begin{equation*}
      C_0(\widehat{\bG})\ni x\mapsto U(x\otimes 1)U^*\in M(C_0(\widehat{\bG})\otimes C_0^u(\widehat{\bG}))
    \end{equation*}
    is the flipped coaction of $C_0^u(\widehat{\bG})$ on its reduced version by comultiplication (as follows from the formula for $\Delta_{\widehat{\bG}}$ on \cite[p.294]{kus-univ}). 
  \end{itemize}
  It follows, then, that the rightmost element of \Cref{eq:applytheta} satisfies
  \begin{equation}\label{eq:inl2gh}
    (1\otimes\pi(a))X(x\otimes 1)X^*\in M(C_0(\widehat{\bG})\otimes K(\cH)).
  \end{equation}
  Now, since we are assuming $\widehat{\bG}$ is coamenable, $C_0(\widehat{\bG})\cong C_0^u(\widehat{\bG})$ is equipped with a counit $\varepsilon:C_0(\widehat{\bG})\to \bC$ \cite[Theorem 3.1]{bt}. It follows that we can apply that counit to the left leg of \Cref{eq:inl2gh} obtaining, in the end, a representation
  \begin{equation*}
    \theta':=(\varepsilon\otimes\id)\theta: C_0(\widehat{\bG})\ltimes_r A\to B(\cH). 
  \end{equation*}
  That that representation gives back the original \Cref{eq:pi} and \Cref{eq:X} by
  \begin{equation*}
    \pi = \theta'\circ\rho\quad\text{and}\quad X = (\id\otimes\theta')(W\otimes 1)
  \end{equation*}
  for the multiplicative unitary
  \begin{equation*}
    W\in M(C_0(\bG)\otimes C_0(\widehat{\bG}))
  \end{equation*}
  is now a simply matter of unwinding the construction, which we omit.
\end{proof}

\begin{remark}
  The proof of \Cref{th:samecrossed} given above follows the same general plan as that of \cite[Theorem 3.7]{lprs}: the latter covers the case when $\widehat{\bG}$ is classical, so that indeed $\bG$ is dual-coamenable (because classical locally compact groups are coamenable).
\end{remark}

\section{Restricting representations to finite-covolume subgroups}\label{se:resfincovol}

The aim of the section is to produce a quantum version of \cite[Proposition 2.2]{klm2}: if $\bG/\bH$ has a finite $\bG$-invariant measure then unitary $\bG$-representations whose restriction to $\bH$ is type-I are themselves type-I. Recall (e.g. \cite[Definition 5.4.2 and \S\S 5.5.1 and 13.9.4]{dixc}):

\begin{definition}\label{def:t1}
  A representation $\pi:A\to B(\cH)$ of a $C^*$-algebra is {\it type-I (or of type I)} if the commutant $\pi(A)'$ is type-I as a von Neumann algebra \cite[A 35]{dixc}.

  For an LCQG $\bG$, a unitary representation $U\in M(C_0(\bG)\otimes K(\cH))$ is {\it type-I} if the associated $C^*$-algebra representation $\pi_U:C_0^u(\widehat{\bG})\to B(\cH)$ of \Cref{def:urep} is type-I in the previous sense.

  $\bG$ itself is {\it type-I} if all of its unitary representations are.
\end{definition}

\begin{theorem}\label{th:exp}
  Fix
  \begin{itemize}
  \item a closed embedding $\iota:\bH\to \bG$ of LCQGs;
  \item a $\bG$-invariant normal state $\theta\in L^{\infty}(\bG/\bH)_*$;
  \item and a unitary $\bG$-representation $U\in M(C_0(\bG)\otimes K(\cH))$. 
  \end{itemize}
  There is a normal conditional expectation $E:R(\bH)'\to R(\bG)'$. 
\end{theorem}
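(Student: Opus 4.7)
The plan is to define
\begin{equation*}
  E(T) := (\theta\otimes\id)\bigl(U(1\otimes T)U^*\bigr), \qquad T\in R(\bH)',
\end{equation*}
where the slice $\theta\otimes\id$ is legitimate once one knows that $U(1\otimes T)U^*$ lies in $\li{\bG/\bH}\otimes B(\cH)$. The entire argument hinges on (i) verifying this well-definedness, and (ii) checking that the output lies in $R(\bG)'$; normality and the conditional-expectation identity will fall out automatically, and the containment $R(\bG)'\subseteq R(\bH)'$ follows from $R(\bH)=\pi_U(\widehat\iota^u(C_0^u(\widehat\bH)))''\subseteq \pi_U(C_0^u(\widehat\bG))''=R(\bG)$.

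For well-definedness, the key input is the restriction identity
\begin{equation*}
  (\iota_r\otimes\id)(U) = U_{13}\,(U|_{\bH})_{23} \quad \text{in } \li{\bG}\otimes \li{\bH}\otimes B(\cH),
\end{equation*}
where $\iota_r:\li{\bG}\to \li{\bG}\otimes\li{\bH}$ is the right $\bH$-action on $\bG$ induced by $\iota$. This is the quantum counterpart of the classical $U(g\iota(h)) = U(g)\,U|_{\bH}(h)$, and it can be read off from the universal-bicharacter description of $U$ and $U|_\bH$ via \cite{mrw} together with the half-universal pentagon equation for $\wW$. Granted this, for $T\in R(\bH)'$ the element $1\otimes 1\otimes T$ commutes with $(U|_{\bH})_{23}$, so
\begin{equation*}
  (\iota_r\otimes\id)\bigl(U(1\otimes T)U^*\bigr) = U_{13}(1\otimes 1\otimes T)U_{13}^* = \bigl(U(1\otimes T)U^*\bigr)_{13},
\end{equation*}
which is exactly the $\iota_r$-invariance expressing membership in $\li{\bG/\bH}\otimes B(\cH)$.

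Next I would show $E(T)\in R(\bG)'$ by exploiting the $\bG$-invariance of $\theta$, namely $(\id\otimes\theta)\Delta_{\bG}=\theta(\cdot)\,1$ on $\li{\bG/\bH}$. Setting $V := U(1\otimes T)U^* \in \li{\bG/\bH}\otimes B(\cH)$ and using the unitary-representation identity $U_{13}U_{23}=(\Delta\otimes\id)U$, a direct computation gives
\begin{equation*}
  U_{13}V_{23}U_{13}^* = (\Delta\otimes\id)(V) \in \li{\bG}\otimes \li{\bG/\bH}\otimes B(\cH).
\end{equation*}
Applying $\id\otimes\theta\otimes\id$, the right-hand side collapses to $1\otimes E(T)$ by invariance, while on the left-hand side slicing the middle leg passes through the outer $U_{13}$-factors and produces $U(1\otimes E(T))U^*$. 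Hence $U(1\otimes E(T))U^*=1\otimes E(T)$, i.e.\ $E(T)$ commutes with all right-leg slices of $U$ and so with $R(\bG)$.

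Finally, normality of $E$ is inherited from that of $\theta$ and of conjugation by $U$; for $T\in R(\bG)'\subseteq R(\bH)'$ one has $U(1\otimes T)U^*=1\otimes T$, whence $E(T)=\theta(1)T=T$, and combined with complete positivity this identifies $E$ as a normal conditional expectation onto $R(\bG)'$. The main obstacle is installing the restriction identity $(\iota_r\otimes\id)U = U_{13}(U|_{\bH})_{23}$ cleanly at the $W^*$-level without losing track of the universal-versus-reduced conventions of \cite{mrw}; once that is done, the argument is an invariant-averaging in slice-map language following the classical \cite[Proposition 2.2]{klm2}.
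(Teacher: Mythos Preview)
Your argument is correct and is essentially identical to the paper's: the paper simply introduces the flipped unitary $V=\mathrm{flip}(U)$ and writes the expectation as $E(T)=(\id\otimes\theta)\bigl(V(T\otimes 1)V^*\bigr)$, which is your formula transported by the flip, and then runs the same two computations (well-definedness via the restriction identity, and range in $R(\bG)'$ via $\bG$-invariance of $\theta$). The restriction identity you flag as the ``main obstacle'' is exactly \cite[Lemma~2.9, equation~(2.2)]{bcv}, which the paper invokes without further comment, so there is no gap there.
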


Although crossed products do not feature directly here, we work our way back into the topic later, as part of the same circle of ideas. The proof strategy is very much parallel to that of \cite[Proposition 2.2]{klm2}. Let $\iota:\bH\le \bG$ be a closed quantum subgroup of a locally compact quantum group. For a unitary $\bG$-representation $U\in M(C_0(\bG)\otimes K(\cH))$ we write
\begin{equation*}
  U|_{\bH}\text{ or }U_r\in M(C_0(\bH)\otimes K(\cH))
\end{equation*}
for its {\it restriction to $\bH$}: see \cite[Proposition 6.5]{mrw} and \cite[\S 2.2]{bcv}. We also write, given such a representation, $R(\bG)$ and $R(\bH)$ for the von Neumann subalgebras  of $B(\cH)$ generated by $\bG$ and $\bH$ respectively, i.e. the weak$^*$ closures of
\begin{equation*}
  \{(\omega\otimes\id)U\ |\ \omega\in L^{\infty}(\bG)_*\}\text{ and }\{(\omega\otimes\id)U|_{\bH}\ |\ \omega\in L^{\infty}(\bH)_*\}
\end{equation*}
respectively. We naturally have
\begin{equation*}
  R(\bH)\subseteq R(\bG) \Longrightarrow R(\bG)'\subseteq R(\bH)'. 
\end{equation*}
To check the first inclusion (whence the second follows), note that $R(\bG)$ is the $W^*$-algebra generated by he image of the representation $\pi_U:C_0^u(\widehat{\bG})\to B(\cH)$ attached to $U$, similarly for $\bH$ and $U_r=U|_{\bH}$, and (\cite[\S 2.2]{bcv}) there is a factorization
\begin{equation*}
  \begin{tikzpicture}[auto,baseline=(current  bounding  box.center)]
    \path[anchor=base] 
    (0,0) node (l) {$C_0^u(\widehat{\bG})$}
    +(2,.5) node (u) {$C_0^u(\widehat{\bH})$}
    +(4,0) node (r) {$B(\cH)$}
    ;
    \draw[->] (l) to[bend left=6] node[pos=.5,auto] {$\scriptstyle \widehat{\iota}^u$} (u);
    \draw[->] (u) to[bend left=6] node[pos=.5,auto] {$\scriptstyle \pi_{U_r}$} (r);
    \draw[->] (l) to[bend right=6] node[pos=.5,auto,swap] {$\scriptstyle \pi_U$} (r);
  \end{tikzpicture}
\end{equation*}
through the morphism $\widehat{\iota}^u$ associated to $\iota:\bH\to \bG$.

Note also, for future reference:
\begin{align*}
  R(\bG)' &= \{T\in B(\cH)\ |\ U^*(1\otimes T)U=1\otimes T\}\numberthis\label{eq:rgprime}\\
  R(\bH)' &= \{T\in B(\cH)\ |\ U_r^*(1\otimes T)U_r=1\otimes T\}.\numberthis\label{eq:rhprime}\\
\end{align*}

Recall (e.g. \cite[Definition III.3.3]{tak1} and \cite[Definition IX.4.1]{tak2}):
\begin{definition}
  For an inclusion $A\subseteq B$ of $C^*$-algebras a {\it norm-1 projection} or {\it conditional expectation} $E:B\to A$ is an idempotent, norm-1 map onto $B$.

  When the inclusion is one of $W^*$-algebras we typically require that conditional expectations be {\it normal}, i.e. weak$^*$-continuous.
\end{definition}

\pf{th:exp}
\begin{th:exp}
  The construction is very much as in the proof of \cite[Theorem 1]{klm1}, adapted to the present quantum setting.

  Write
  \begin{equation*}
    V:=\mathrm{flip}(U)\in B(\cH)\otimes L^{\infty}(\bG)\text{ and }V_r:=\mathrm{flip}(U_r)\in B(\cH)\otimes L^{\infty}(\bH)
  \end{equation*}
  for the unitaries obtained from $U$ and $U_r=U|_{\bH}$ by interchanging tensorands, so that
  \begin{align*}
    (\id\otimes\Delta_{\bG})V &= V_{12}V_{13}\numberthis\label{eq:delv}\\
    (\id\otimes\iota_r)V &= V_{12}V_{r13}\numberthis\label{eq:iotav}
                           \quad\text{\cite[Lemma 2.9, equation (2.2)]{bcv}}\\
  \end{align*}
  The map
  \begin{equation}\label{eq:conjact}
    B(\cH)\ni T\longmapsto V(T\otimes 1)V^*\in B(\cH)\otimes L^{\infty}(\bG) 
  \end{equation}
  is the (right) conjugation action of $\bG$ on $B(\cH)$ attached to $U$, and the expectation $E$ will be
  \begin{equation*}
    R(\bH)'\ni T\stackrel{E}{\longmapsto} (\id\otimes\theta) V(T\otimes 1)V^*\in R(\bG)'.
  \end{equation*}
  We have to argue that
  \begin{enumerate}[(a)]
  \item\label{item:1} the definition indeed makes sense, i.e.
    \begin{equation}\label{eq:defe}
      T\in R(\bH)'\Rightarrow V(T\otimes 1)V^*\in B(\cH)\otimes L^{\infty}(\bG/\bH),
    \end{equation}
    so that $(\id\otimes\theta)$ is then applicable;
  \item\label{item:2} $E$ is a normal and has norm 1;
  \item\label{item:3} $E$ is the identity on $R(\bG)'\subseteq R(\bH)'$;
  \item\label{item:4} and its range is contained in $R(\bG)'$. 
  \end{enumerate}
  We tackle these in turn.
  
  {\bf \Cref{item:1}} To verify \Cref{eq:defe}, fix $T\in R(\bH)'$; then:
  \begin{align*}
    (\id\otimes\iota_r)V(T\otimes 1)V^* &= V_{12}V_{r13}(T\otimes 1\otimes 1)V_{r13}^*V_{12}^*
                                          \quad\text{\Cref{eq:iotav}}\\
                                        &=V_{12}(T\otimes 1\otimes 1)V_{12}^*
                                          \quad\text{because $T\in R(\bH)'$}\\
                                        &=V(T\otimes 1)V^*\otimes 1,
  \end{align*}
  which indeed means that
  \begin{equation*}
    V(T\otimes 1)V^*\in B(\cH)\otimes L^{\infty}(\bG/\bH)\subset B(\cH)\otimes L^{\infty}(\bG);
  \end{equation*}
  this ensures that $E$ is indeed well defined, taking care of \Cref{item:1}.

  {\bf \Cref{item:2}} $E$ is a composition of a von-Neumann-algebra morphism \Cref{eq:conjact} and a normal, (completely) positive map $\id\otimes\theta$ \cite[Theorem IV.5.13]{tak1}, so it too must be normal and completely positive. Since it is moreover clearly unital, its norm is $\|E(1)\|=1$ (as in \cite[Proposition 1.6.2]{arv}, for instance).

  {\bf \Cref{item:3}} It follows from \Cref{eq:rgprime} that
  \begin{equation*}
    T\in R(\bG)'\Rightarrow V(T\otimes 1)V^* = T\otimes 1,
  \end{equation*}
  and a further application of the unital $1\otimes\theta$ will produce $T$.

  {\bf \Cref{item:4}} We have to show that $V$ commutes with operators of the form $E(T)\otimes 1$. For a fixed $T\in R(\bH')$, that computation is as follows.
  \begin{align*}
    V(E(T)\otimes 1)V^* &= V((\id\otimes \theta)V(T\otimes 1)V^*\otimes 1)V^*
                          \quad\text{by definition}\\
                        &=(\id\otimes \id\otimes\theta)V_{12}V_{13}(T\otimes 1\otimes 1)V_{13}^*V_{12}^*\\
                        &=(\id\otimes \id\otimes\theta)(\id\otimes\Delta_{\bG})V(T\otimes 1)V^*
                          \quad\text{\Cref{eq:delv}}\\
                        &=(\id\otimes \theta)V(T\otimes 1)V^*\otimes 1
                          \quad\text{$\bG$-invariance of $\theta$}\\
                        &=E(T)\otimes 1.
  \end{align*}
  This concludes the proof.
\end{th:exp}

Consequently:

\begin{corollary}\label{cor:slift}
  With the same hypothesis as in \Cref{th:exp}, if $R(\bH)'$ is semifinite or of type I then so is $R(\bG)'$. 
\end{corollary}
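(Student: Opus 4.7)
The plan is to read the corollary off of \Cref{th:exp}: that result supplies a normal conditional expectation $E\colon R(\bH)'\to R(\bG)'$, which realises $R(\bG)'$ as the range of a normal norm-$1$ projection out of $R(\bH)'$. Both the semifinite and the type-I classes of von Neumann algebras are classically known to be stable under this operation, so the statement reduces to invoking (or else reproducing) these two permanence results, with $E$ as the bridge between the two commutants.

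For the semifinite case, fix an n.s.f.\ trace $\tau$ on $R(\bH)'$ and manufacture a normal tracial weight on $R(\bG)'$ out of $\tau$ and $E$. The key algebraic input is the $R(\bG)'$-bimodule identity $E(axb)=aE(x)b$ (for $a,b\in R(\bG)'$ and $x\in R(\bH)'$), coupled with $E|_{R(\bG)'}=\id$; this ensures traciality. Semifiniteness of the resulting weight is extracted from a standard weak$^*$-density argument: applying $E$ to the $\tau$-finite ideal of $R(\bH)'$ produces enough finite-weight elements in $R(\bG)'$ to cover it. For the type-I case one argues analogously, using the characterization of type-I von Neumann algebras via abelian projections of central support $1$; an appropriate manipulation of such projections of $R(\bH)'$ through $E$, combined with a maximality argument inside $R(\bG)'$, yields the required abelian projections there.

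The main subtle point I anticipate is that \Cref{th:exp} does not establish faithfulness of $E$, since the invariant state $\theta$ need not be faithful on $L^{\infty}(\bG/\bH)$. The standard remedy is to pass to the support projection of $E$: on that cut-down the expectation becomes faithful, the preceding arguments apply cleanly, and the semifinite and type-I properties of $R(\bG)'$ are insensitive to such a reduction (the complementary corner being reached by $E$ trivially). With that technicality absorbed, both halves of the corollary fall out of the two standard permanence principles invoked above.
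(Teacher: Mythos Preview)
Your reduction is exactly the paper's: \Cref{th:exp} supplies a normal conditional expectation $E\colon R(\bH)'\to R(\bG)'$, and the corollary then follows by invoking Tomiyama's permanence theorems (specifically \cite[Theorems 3 and 4]{tomi3}), which assert that a von Neumann subalgebra admitting a normal conditional expectation from a semifinite (respectively type-I) ambient algebra is itself semifinite (respectively type-I). The paper's proof is a one-line citation of precisely those results.

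Where your write-up departs from this is in the optional ``or else reproducing'' portion, and there the sketches are not quite right. In the semifinite half you propose pushing the $\tau$-finite ideal of $R(\bH)'$ through $E$ to obtain finite-weight elements in $R(\bG)'$, but there is no reason $\tau(E(x))<\infty$ when $\tau(x)<\infty$; indeed the restricted trace $\tau|_{R(\bG)'}$ need not be semifinite at all (take $R(\bH)'=B(\ell^2)$, $R(\bG)'=\bC\cdot 1$). Tomiyama's actual argument proceeds differently, via a contradiction with a putative type-III summand of the range. The type-I sketch is likewise too vague: $E$ does not carry projections to projections, so ``manipulating abelian projections through $E$'' needs substantial work. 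Finally, your concern about faithfulness of $E$ is unnecessary here: Tomiyama's theorems require only normality, not faithfulness, so no support-projection manoeuvre is needed. In short, the proof is correct provided you simply cite \cite{tomi3} rather than attempt to reprove it.
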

\begin{proof}
  Indeed, by \cite[Theorems 3 and 4]{tomi3} semifinite (type-I) von Neumann algebras only admit normal expectations onto semifinite (respectively type-I) von Neumann subalgebras.
\end{proof}

The type-I branch of the statement says that $\bG$-representations are type-I provided their restrictions to $\bH$ are: this, classically, is \cite[Proposition 2.2]{klm2}. Specializing again to {\it all} unitary representations (\cite[Theorem 1]{klm1} being the classical analogue):

\begin{corollary}\label{cor:wlift}
  Let $\bH\le \bG$ be a closed embedding of LCQGs such that $L^{\infty}(\bG/\bH)$ has an invariant normal state.

  If $\bH$ is of type I then so is $\bG$.  \qedhere
\end{corollary}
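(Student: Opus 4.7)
\begin{proof}
The statement is essentially a bookkeeping consequence of \Cref{cor:slift}. Fix a unitary $\bG$-representation $U\in M(C_0(\bG)\otimes K(\cH))$; we must show that $U$ is type-I in the sense of \Cref{def:t1}, i.e. that $\pi_U(C_0^u(\widehat{\bG}))'$ is a type-I von Neumann algebra.

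The first observation is that $\pi_U(C_0^u(\widehat{\bG}))'$ equals $R(\bG)'$: since $R(\bG)$ is by definition the weak$^*$ closure of the linear span $\{(\omega\otimes\id)U\,|\,\omega\in \li{\bG}_*\}$, and this weak$^*$ closure coincides with the weak$^*$ closure of $\pi_U(C_0^u(\widehat{\bG}))$ by the formula $U=(\id\otimes\pi_U)\wW$ of \Cref{eq:upiu}, the usual double-commutant argument gives $\pi_U(C_0^u(\widehat{\bG}))'=R(\bG)'$. The analogous identification for the restriction $U|_{\bH}$ gives $\pi_{U_r}(C_0^u(\widehat{\bH}))'=R(\bH)'$.

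Now apply the hypothesis that $\bH$ is type-I to the restricted representation $U|_{\bH}\in M(C_0(\bH)\otimes K(\cH))$: this yields that $R(\bH)'$ is type-I as a von Neumann algebra. \Cref{cor:slift}, whose hypotheses are exactly those we have assumed, then delivers a normal conditional expectation $R(\bH)'\to R(\bG)'$ together with the conclusion that $R(\bG)'$ is type-I as well. Hence $U$ is type-I. As $U$ was arbitrary, $\bG$ is type-I by \Cref{def:t1}.
\end{proof}

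The only step that requires any thought is the identification of $\pi_U(C_0^u(\widehat{\bG}))'$ with $R(\bG)'$ (and similarly for $\bH$); once this is in place, the corollary is an immediate specialization of \Cref{cor:slift} to the type-I branch of the dichotomy. There is no real obstacle, which is consistent with the statement being labelled a corollary.
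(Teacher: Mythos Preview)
Your proof is correct and follows the same route as the paper, which gives no explicit argument (the statement carries a \texttt{\textbackslash qedhere}) and simply regards \Cref{cor:wlift} as the specialization of the type-I branch of \Cref{cor:slift} to all unitary $\bG$-representations. Your added justification that $\pi_U(C_0^u(\widehat{\bG}))'=R(\bG)'$ (and the $\bH$-analogue) is correct and a reasonable piece of housekeeping to include.
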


\section{Invariant measures on (compact) quantum spaces}\label{se:chvol}

The ``compact quantum spaces'' in question are simply unital $C^*$-algebras. The section connects back to the preceding material as follows.

The authors of \cite{gk} address type-I-lifting (classical) results analogous to \cite[Theorem 1]{klm1} and \cite[Proposition 2.2]{klm2} by
\begin{itemize}
\item noting that for a cocompact embedding $\bH\le \bG$ of locally compact groups such that $\bG/\bH$ has a finite $\bG$-invariant measure the canonical map
\begin{equation}\label{eq:initfaith}
  C_0^u(\widehat{\bG})\to C_0^u(\widehat{\bG})\ltimes_f C(\bG/\bH)
\end{equation}
is an embedding \cite[proof of Proposition 4.2]{gk};
\item and then leveraging {\it imprimitivity} \cite[\S 3.7]{mack-unit} to recast $\bH$-representations as $C_0^u(\widehat{\bG})\ltimes_f C(\bG/\bH)$-representations (see \cite[p.275]{gk} as well as \cite[Introduction]{gtm}, where this is further elaborated upon).
\end{itemize}
Some of the results (e.g. \cite[Corollary 4.5]{gk}) turn out to be weaker than \cite[Theorem 1]{klm1} or \cite[Proposition 2.2]{klm2} because the latter only assumes a finite invariant measure (and no compactness), but the injectivity of \Cref{eq:initfaith} seems of interest on its own, and is what motivated the present quantum version thereof.

To make sense of the statement of \Cref{th:prestate}, recall (e.g. \cite[Definition 3.3 and Proposition 3.4]{dfsw} or \cite[Lemma 3.1]{dsv}):

\begin{definition}\label{def:inv}
  For a unitary representation $U\in M(C_0(\bG)\otimes K(\cH))$ of an LCQG a vector $\xi\in \cK$ is {\it $U$- or $\bG$-invariant} (or just plain {\it invariant}, all else being clear) if either of the following equivalent conditions holds
  \begin{itemize}
  \item $U(\eta\otimes \xi)=\eta\otimes \xi$ for all $\eta\in L^2(\bG)$. 
  \item For all $x\in C_0^u(\widehat{\bG})$ we have
    \begin{equation*}
      \pi_U(x)\xi = \varepsilon(x)\xi,
    \end{equation*}
    where $\pi_U$ is as in \Cref{def:urep} and $\varepsilon:C_0^u(\widehat{\bG})\to \bC$ is the {\it counit} of \cite[Proposition 6.3]{kus-univ}.
  \end{itemize}
\end{definition}

The above discussion on the injectivity of \Cref{eq:initfaith} extends to actions on {\it non}-unital $C^*$-algebras, much as in \cite[Lemma 4.1]{gtm}. First, as usual \cite[Definition II.6.2.1]{blk}, a {\it state} $\theta$ on a (possibly non-unital) $C^*$-algebra $A$ is a positive linear functional of norm $1$. The strictly-continuous extension of $\theta$ to $M(A)$ is then a state in the usual sense, i.e. unital \cite[Proposition II.6.2.5]{blk}.

For an action $\rho:A\to M(C_0(\bG)\otimes A)$, a functional $\theta\in A^*$ is {\it invariant} if
\begin{equation*}
  (\id\otimes \theta)\rho = \theta(\cdot) 1: A\to M(C_0(\bG)).
\end{equation*}

\begin{theorem}\label{th:prestate}
  Let $\rho:A\to M(C_0(\bG)\otimes A)$ be an action of an LCQG on a $C^*$-algebra $A$, and consider the following conditions.
  \begin{enumerate}[(a)]    
  \item\label{item:5} There is a $\rho$-invariant state $\phi\in A^*$.
  \item\label{item:6} There is a $\rho$-equivariant representation $(U,\pi)$ having a non-zero $\bG$-invariant vector. 
  \item\label{item:7} The canonical non-degenerate morphism
    \begin{equation}\label{eq:canemb}
      C_0^u(\widehat{\bG})\to M(C_0^u(\widehat{\bG})\ltimes_f A)
    \end{equation}
    of \Cref{eq:wantnd} is one-to-one.  
  \end{enumerate}
  We have
  \begin{equation*}
    \text{\Cref{item:5}} \Longleftrightarrow \text{\Cref{item:6}} \Longrightarrow \text{\Cref{item:7}}. 
  \end{equation*}
\end{theorem}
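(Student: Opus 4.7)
The equivalence \Cref{item:5}$\Leftrightarrow$\Cref{item:6} I would handle by the standard GNS construction. Starting from an invariant state $\phi$, form the GNS triple $(\pi_\phi,\cH_\phi,\xi_\phi)$ (extending $\phi$ strictly to $M(A)$ first, in the non-unital case) and define a map $U$ on $L^2(\bG)\otimes\cH_\phi$ by
\begin{equation*}
  U\bigl(\eta\otimes\pi_\phi(a)\xi_\phi\bigr):=(\id\otimes\pi_\phi)\rho(a)(\eta\otimes\xi_\phi),\quad\eta\in L^2(\bG),\ a\in A.
\end{equation*}
The invariance relation $(\id\otimes\phi)\rho=\phi(\cdot)1$ makes $U$ isometric, and the density axiom of \Cref{def:act} makes $U$ surjective; coassociativity of $\rho$ then makes $(U,\pi_\phi)$ a $\rho$-equivariant pair, and $U$-invariance of $\xi_\phi$ follows upon plugging an approximate unit of $A$ into the defining formula and passing to the strict limit. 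For the converse, set $\phi(\cdot):=\langle\pi(\cdot)\xi,\xi\rangle/\|\xi\|^2$ and verify $(\id\otimes\phi)\rho=\phi(\cdot)1$ directly from \Cref{eq:defequivariance} together with $U(\eta\otimes\xi)=\eta\otimes\xi$.

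For the implication \Cref{item:6}$\Rightarrow$\Cref{item:7}, let $(U_0,\pi_\phi,\cH_\phi)$ be an equivariant representation carrying a unit $\bG$-invariant vector $\xi_\phi$, and pick any faithful non-degenerate representation $\pi_V:C_0^u(\widehat{\bG})\to B(\cK)$ (a direct sum of GNS representations over a separating family of states on $C_0^u(\widehat{\bG})$ works), with $V:=(\id\otimes\pi_V)\wW\in M(C_0(\bG)\otimes K(\cK))$ the associated unitary $\bG$-representation. On $\cK\otimes\cH_\phi$ I then form the pair
\begin{equation*}
  (U',\pi'):=\bigl(V_{12}(U_0)_{13},\ 1\otimes\pi_\phi\bigr).
\end{equation*}
Short leg-bookkeeping calculations, using $(\Delta\otimes\id)U=U_{12}U_{13}$ for each of $V$ and $U_0$ plus commutation of operators on disjoint legs, show $U'$ is a unitary $\bG$-representation and reduce the equivariance of $(U',\pi')$ to that of $(U_0,\pi_\phi)$; accordingly $(U',\pi')$ integrates to a non-degenerate $\sigma:C_0^u(\widehat{\bG})\ltimes_f A\to B(\cK\otimes\cH_\phi)$ whose composition with \Cref{eq:canemb} equals $\pi_{U'}$, so injectivity of \Cref{eq:canemb} will follow once $\pi_{U'}$ is shown to be faithful.

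The key (and only non-routine) step is that the $U_0$-invariance of $\xi_\phi$, i.e. $U_0(\eta\otimes\xi_\phi)=\eta\otimes\xi_\phi$, forces
\begin{equation*}
  V_{12}(U_0)_{13}(\eta\otimes v\otimes\xi_\phi)=V(\eta\otimes v)\otimes\xi_\phi,\quad\eta\in L^2(\bG),\ v\in\cK.
\end{equation*}
Slicing both sides of $U'=(\id\otimes\pi_{U'})\wW$ by vector functionals $\omega_{\eta,\eta'}\in B(L^2(\bG))_*$ then delivers
\begin{equation*}
  \pi_{U'}\bigl((\omega_{\eta,\eta'}\otimes\id)\wW\bigr)(v\otimes\xi_\phi)=\pi_V\bigl((\omega_{\eta,\eta'}\otimes\id)\wW\bigr)v\otimes\xi_\phi,
\end{equation*}
with the same slicing argument, applied to vectors $\xi'\perp\xi_\phi$ in the second inner-product slot, yielding orthogonality to $\cK\otimes(\bC\xi_\phi)^\perp$. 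Since the slices $(\omega\otimes\id)\wW$ span a norm-dense subset of $C_0^u(\widehat{\bG})$ as part of its construction \cite{kus-univ}, continuity extends this to $\pi_{U'}(x)(v\otimes\xi_\phi)=\pi_V(x)v\otimes\xi_\phi$ for all $x\in C_0^u(\widehat{\bG})$, and faithfulness of $\pi_V$ then forces faithfulness of $\pi_{U'}$.

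The hardest part, as just indicated, is describing $\pi_{U'}$ on the invariant subspace $\cK\otimes\bC\xi_\phi$; everything else is routine GNS and crossed-product bookkeeping, with the usual strict-extension care needed in the non-unital case.
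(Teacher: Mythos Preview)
Your argument is correct and follows the paper's strategy: GNS for \Cref{item:5}$\Leftrightarrow$\Cref{item:6}, and tensoring an equivariant pair carrying an invariant vector with a faithful $\bG$-representation for \Cref{item:6}$\Rightarrow$\Cref{item:7}. Two minor remarks. First, the operator you define in \Cref{item:5}$\Rightarrow$\Cref{item:6} is the paper's $U^*$, not $U$ (compare your formula with \Cref{eq:uast} and the Kustermans--Vaes multiplicative-unitary convention), so the corepresentation and covariance identities hold for its adjoint; this is a labeling slip, not a gap, and the invariance of $\xi_\phi$ is unaffected since $U$ and $U^*$ share fixed vectors. Second, where you compute $\pi_{U'}$ on $\cK\otimes\bC\xi_\phi$ by slicing $\wW$ with vector functionals and invoking density of the slices, the paper reaches the same conclusion more algebraically: it records $\pi_{U'}=(\pi_U\otimes\pi_V)\Delta_{\widehat{\bG}}$ (a consequence of $(\id\otimes\Delta_{\widehat{\bG}})\wW=\wW_{13}\wW_{12}$) and then combines the counit description $\pi_U(\cdot)\xi=\varepsilon(\cdot)\xi$ of invariance with $(\varepsilon\otimes\id)\Delta_{\widehat{\bG}}=\id$. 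Your slicing is precisely this identity unwound at the level of matrix coefficients, so the two routes are equivalent; the paper's packaging is slightly cleaner, while yours avoids appealing to the universal comultiplication. The paper also passes to the unitization $\widetilde{A}$ once and for all rather than threading approximate units through the non-unital case, but that is cosmetic.
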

\begin{proof}  
  We prove the three claimed implications separately.
  
  {\bf \text{\Cref{item:5}} $\Longrightarrow$ \text{\Cref{item:6}}.} Here, it will be convenient to assume that $A$ is unital. This is always achievable:

  An action $\rho$ extends to a unital map
  \begin{equation*}
    \widetilde{A}\to M(C_0(\bG)\otimes A),
  \end{equation*}
  which in fact can be regarded as taking values in $M(C_0(\bG)\otimes \widetilde{A})$ (because $\rho$ did: \Cref{def:act} \Cref{item:13}). Naturally, the resulting morphism
  \begin{equation*}
    \widetilde{\rho}:\widetilde{A}\to M(C_0(\bG)\otimes \widetilde{A})
  \end{equation*}
  is non-degenerate (being unital), and its coassociativity is a simple check. It follows, then, that an action $\rho$ on $A$ induces another, $\widetilde{\rho}$, on the unitization $\widetilde{A}$. Invariant states or equivariant representations with invariant vectors therein also transport over from $A$ to $\widetilde{A}$ in the obvious fashion, so that the items in \Cref{item:5} or \Cref{item:6} for $\rho$ are in bijection, respectively, with the same items for $\widetilde{\rho}$.

  Throughout the proof of the current implication we will thus assume that $A$ is unital. Let $(\cK_{\phi},\ \pi_{\phi},\ \Lambda_{\phi})$ be the GNS representation \cite[\S 6.4]{blk} associated to an invariant state $\phi$, so that
  \begin{equation*}
    \pi_{\phi}:A\to B(\cK_{\phi}),\quad \Lambda_{\phi}:A\to \cK_{\phi}
  \end{equation*}
  are, respectively, a representation and a map satisfying
  \begin{equation*}
    \phi(a) = \braket{\xi_{\phi}\mid a\xi_{\phi}}
  \end{equation*}
  for the unit vector $\xi_{\phi}:=\Lambda_{\phi}(1)$ (recall that $A$ is unital). This will be the $A$-half of the desired equivariant representation. The other component, $U\in M(C_0(\bG)\otimes K(\cK_{\phi}))$, is defined by
  \begin{equation}\label{eq:uast}
    U^*(\Lambda_{\varphi}(x)\otimes \Lambda_{\phi}(a)) := (\Lambda_{\varphi}\otimes \Lambda_{\phi})(\rho(a)(x\otimes 1));
  \end{equation}
  here, $a$ ranges over $A$, $(L^2(\bG),\pi_{\varphi},\Lambda_{\varphi})$ is the GNS representation of the left Haar weight $\varphi$, and
  \begin{equation*}
    x\in \fn_{\varphi}
    :=
    \{y\in \li{\bG}\ |\ \varphi(y^*y)<\infty\}
    ,
  \end{equation*}
  i.e. $x$ is {\it square-integrable} with respect to that weight.

  This is a variant of the usual construction, employed in \cite[Proposition 3.17]{kvcast} to define the multiplicative unitary of $\bG$. Setting $\delta=1$, this also coincides with the construction in \cite[Proposition 2.4]{vs-impl} (where our $U^*$ is $V_{\theta}$).
  
  The latter result implies that this is indeed a unitary $\bG$-representation and that in fact $(U,\pi_{\phi})$ is covariant (\cite[second of the four displayed equations in the conclusion of Proposition 2.4]{vs-impl}, where $V_{\theta}$ is our $U^*$). As for invariant vectors, setting $a=1$ in \Cref{eq:uast} (so that $\Lambda_{\phi}(1)=\xi_{\phi}$) yields
  \begin{equation*}
    U^*(\Lambda_{\varphi}(x)\otimes \xi_{\phi}) = \Lambda_{\varphi}(x)\otimes \xi_{\phi},\ \forall x\in \fn_{\varphi}. 
  \end{equation*}
  Since $\Lambda_{\varphi}(\fn_{\varphi})\subseteq L^2(\bG)$ is dense, this means precisely that $\xi_{\phi}$ is $U$-invariant.
  
  {\bf \text{\Cref{item:5}} $\Longleftarrow$ \text{\Cref{item:6}}.} Consider an equivariant representation
  \begin{equation*}
    U\in M(C_0(\bG)\otimes K(\cK))\quad\text{and}\quad \pi:A\to B(\cK)
  \end{equation*}
  as in the statement, with a $U$-invariant unit vector $\xi\in \cK$. The claim is that the state
  \begin{equation}\label{eq:phidef}
    \phi(a):=\braket{\xi\mid \pi(a)\xi},\ a\in A
  \end{equation}
  is $\rho$-invariant. Indeed, for $\eta,\ \zeta\in L^2(\bG)$ we have 
  \begin{align*}
    \phi((\omega_{\eta,\zeta}\otimes\id)\rho(a)) &= \braket{\xi \mid (\omega_{\eta,\zeta}\otimes\pi)\rho(a)\xi}
                                                   \quad\text{by \Cref{eq:phidef}}\\
                                                 &= \braket{\eta\otimes\xi \mid (\id\otimes\pi)\rho(a)(\zeta\otimes\xi)}\\
                                                 &=\braket{\eta\otimes\xi \mid U^*(1\otimes \pi(a))U(\zeta\otimes\xi)}
                                                   \quad\text{by equivariance}\\
                                                 &=\braket{U(\eta\otimes\xi) \mid (1\otimes \pi(a))U(\zeta\otimes\xi)}\\
                                                 &=\braket{\eta\otimes\xi \mid (1\otimes \pi(a))(\zeta\otimes\xi)}
                                                   \quad\text{by the $U$-invariance of $\xi$}\\
                                                 &=\braket{\eta\mid\zeta}\phi(a).
  \end{align*}
  This is what $\phi$ being $\rho$-invariant means, so we are done.

  {\bf \text{\Cref{item:6}} $\Longrightarrow$ \text{\Cref{item:7}}.} We have to argue that under the hypothesis \Cref{item:6} some $\rho$-equivariant representation $(U,\pi)$ is faithful on $C_0^u(\widehat{\bG})$ when restricted along \Cref{eq:canemb}.

  We already have a covariant representation $(U,\pi)$ on say, a Hilbert space $\cK$, as in \Cref{item:6}. Next, fix a unitary representation
  \begin{equation*}
    V\in M(C_0(\bG)\otimes K(\cH))
  \end{equation*}
  so that the morphism $\pi_V:C_0^u(\widehat{\bG})\to B(\cH)$ via \cite[Proposition 5.2]{kus-univ} is faithful. The target covariant representation $(U',\pi')$ will now have carrier space $\cK\otimes \cH$, and its components are defined as follows.
  \begin{itemize}
  \item The $\bG$-representation-component $U'$ is the {\it tensor product} of $U$ and $V$ (as $\bG$-representations), denoted by `$\tpr$' in \cite[\S 1]{bcv}:
    \begin{equation}\label{eq:defu'}
      U':= U\tpr V:=V_{13}U_{12}\in M(C_0(\bG)\otimes K(\cK)\otimes K(\cH)). 
    \end{equation}
  \item As for the $A$-representation half $\pi'$, simply set
    \begin{equation}\label{eq:defpi'}
      A\ni a\stackrel{\pi'}{\longmapsto} \pi(a)\otimes 1\in B(\cK)\otimes B(\cH)\subseteq B(\cK\otimes \cH).
    \end{equation}
  \end{itemize}
  The equivariance condition is immediate: for $a\in A$ we have 
  \begin{align*}
    (\id\otimes\pi')\rho(x) &= (\id\otimes\pi)\rho(x)\otimes 1
                              \quad\text{by \Cref{eq:defpi'}}\\
                            &= U^*(1\otimes \pi(a))U\otimes 1
                              \quad\text{the equivariance of $(U,\pi)$}\\
                            &= U_{12}^*V_{13}^*(1\otimes \pi(a)\otimes 1)V_{13}U_{12}\\
                            &= U'^* (1\otimes\pi'(a)) U
                              \quad\text{\Cref{eq:defu',eq:defpi'}}
  \end{align*}
  as elements of $\li{\bG}\otimes B(\cK\otimes \cH)$.

  Nothing, so far, uses the existence of a $U$-invariant vector; the constructions, up to this stage, go through in general: we can always tensor an arbitrary unitary $\bG$-representation $V$ with an equivariant representation $(U,\pi)$ and obtain another such.

  Given, furthermore, a $U$-invariant unit vector $\xi$, we will argue that the morphism
  \begin{equation*}
    \pi_{U'}:C_0^u(\widehat{\bG})\to B(\cK\otimes \cH)
  \end{equation*}
  attached to $U'$ (by \cite[Proposition 5.2]{kus-univ} again) is an embedding. The plan is as follows:
  \begin{enumerate}[(i)]
  \item Observe that the diagram
    \begin{equation}\label{eq:piuvdiag}
      \begin{tikzpicture}[auto,baseline=(current  bounding  box.center)]
        \path[anchor=base] 
        (0,0) node (l) {$C_0^u(\widehat{\bG})$}
        +(4,.5) node (u) {$M(C_0^u(\widehat{\bG})\otimes C_0^u(\widehat{\bG}))$}
        +(8,.5) node (r) {$B(\cK)\otimes B(\cH)$}
        +(12,0) node (rrr) {$B(\cK\otimes \cH)$}
        ;
        \draw[->] (l) to[bend left=6] node[pos=.5,auto] {$\scriptstyle \Delta_{\widehat{\bG}}$} (u);
        \draw[->] (u) to[bend left=6] node[pos=.5,auto] {$\scriptstyle \pi_U\otimes \pi_V$} (r);
        \draw[->] (r) to[bend left=6] node[pos=.5,auto] {$\scriptstyle \subseteq$} (rrr);
        \draw[->] (l) to[bend right=6] node[pos=.5,auto,swap] {$\scriptstyle \pi_{U'}$} (rrr);
      \end{tikzpicture}
    \end{equation}
    commutes, i.e.
    \begin{equation}\label{eq:piuv}
      \pi_{U'} = (\pi_U\otimes \pi_V)\Delta_{\widehat{\bG}} : C_0^u(\widehat{\bG})\to B(\cK)\otimes B(\cH)\subseteq B(\cK\otimes \cH).
    \end{equation}
  \item Since $\xi\in \cK$ is $U$-invariant (\Cref{def:inv}), we have
    \begin{equation}\label{eq:xinv}
      \pi_U(x)\xi = \varepsilon(x)\xi,\ \forall x\in C_0^u(\widehat{\bG}).
    \end{equation}
  \item Whence it follows that for all $x\in C_0^u(\widehat{\bG})$ and vectors $\eta\in \cH$ we have
    \begin{align*}
      \pi_{U'}(x)(\xi\otimes\eta) &= (\pi_U\otimes \pi_V)\Delta_{\widehat{\bG}}(x) (\xi\otimes \eta)
                                    \quad\text{\Cref{eq:piuv}}\\
                                  &= \xi\otimes \pi_V((\varepsilon\otimes\id)\Delta_{\widehat{\bG}}(x))\eta
                                    \quad\text{\Cref{eq:xinv}}\\
                                  &= \xi\otimes \pi_V(x)\eta
                                    \quad\text{\cite[Proposition 6.3]{kus-univ}}.
    \end{align*}
  \end{enumerate}
  Since $\pi_V$ is assumed faithful this computation implies the faithfulness of $\pi_{U'}$, finishing the proof modulo the commutativity of \Cref{eq:piuvdiag}.

  For the latter, recall that the correspondence $U\leftrightarrow \pi_U$ is given by \Cref{eq:upiu}. The definition \Cref{eq:defu'} of $U'$ then recovers that unitary as
  \begin{align*}
    U' &= (\id\otimes\pi_U\otimes \pi_V) \wW_{13} \wW_{12}
    \\
       &= (\id\otimes\pi_U\otimes \pi_V) (\id\otimes\Delta_{\widehat{\bG}}) \wW
         \quad\text{\cite[line preceding (6.1)]{kus-univ}},
  \end{align*}
  showing that indeed \Cref{eq:piuvdiag} commutes. 
\end{proof}

To return to the compact (i.e. unital) case that provided the initial motivation as explained at the start of this section:

\begin{corollary}\label{cor:cpct}
  Let $\rho:A\to M(C_0(\bG)\otimes A)$ be an action of an LCQG on a unital $C^*$-algebra $A$. If $A$ has a $\rho$-invariant state then \Cref{eq:canemb} is in fact an embedding
  \begin{equation*}
    C_0^u(\widehat{\bG})\subseteq C_0^u(\widehat{\bG})\ltimes_f A.
  \end{equation*}
\end{corollary}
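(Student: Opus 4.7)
The corollary is essentially a book-keeping refinement of \Cref{th:prestate}\Cref{item:7}: the latter already gives injectivity of the canonical morphism
\begin{equation*}
  C_0^u(\widehat{\bG})\to M(C_0^u(\widehat{\bG})\ltimes_f A)
\end{equation*}
from the hypothesis that a $\rho$-invariant state exists. What remains is to observe that, when $A$ is unital, this morphism already takes values in $C_0^u(\widehat{\bG})\ltimes_f A$ rather than its multiplier algebra, so the injection displayed in the conclusion follows at once.

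To see the factorization through $C_0^u(\widehat{\bG})\ltimes_f A$, I would invoke \Cref{pr:havendeg}: for any $\rho$-equivariant representation $(U,\pi,\cK)$, we have
\begin{equation*}
  C^*(U,\pi) \;=\; \overline{\pi_U(C_0^u(\widehat{\bG}))\cdot \pi(A)}^{\|\cdot\|}
  \;=\; \overline{\pi(A)\cdot \pi_U(C_0^u(\widehat{\bG}))}^{\|\cdot\|}.
\end{equation*}
Since $A$ is unital and $\pi:A\to B(\cK)$ is non-degenerate, $\pi(1_A)=1_{B(\cK)}$, so for every $x\in C_0^u(\widehat{\bG})$,
\begin{equation*}
  \pi_U(x) \;=\; \pi_U(x)\cdot \pi(1_A) \;\in\; C^*(U,\pi).
\end{equation*}
Applied to a weakly universal equivariant pair, this shows that the canonical map $C_0^u(\widehat{\bG})\to M(C_0^u(\widehat{\bG})\ltimes_f A)$ factors through the crossed product itself.

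Combining this factorization with the injectivity provided by \Cref{th:prestate} \Cref{item:5}$\Rightarrow$\Cref{item:7} (the $\rho$-invariant state hypothesis is exactly what is needed there) yields the desired embedding $C_0^u(\widehat{\bG})\subseteq C_0^u(\widehat{\bG})\ltimes_f A$. There is no substantial obstacle here; the one point one should not overlook is that ``non-degenerate'' for a $C^*$ morphism into a multiplier algebra is precisely what ensures $\pi(1_A)=1$ when $A$ has a unit, so that the multiplication $\pi_U(x)\cdot \pi(1_A)$ makes sense and recovers $\pi_U(x)$ inside $C^*(U,\pi)$ rather than only in its multiplier algebra.
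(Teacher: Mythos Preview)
Your proposal is correct and follows essentially the same two-step plan as the paper: first observe that when $A$ is unital the canonical map lands in the crossed product itself (not merely its multiplier algebra), then invoke \Cref{th:prestate} for injectivity. The only cosmetic difference is in the justification of the first step: the paper appeals to the functoriality of $C_0^u(\widehat{\bG})\ltimes_f-$ (applied to the equivariant unital inclusion $\bC\hookrightarrow A$, citing \cite[Proposition 4.7]{vrgn-phd}), whereas you argue directly from \Cref{pr:havendeg} via $\pi_U(x)=\pi_U(x)\pi(1_A)\in C^*(U,\pi)$; both routes are equally valid and amount to the same observation.
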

\begin{proof}
  The functoriality \cite[Proposition 4.7]{vrgn-phd} of the $C_0^u(\widehat{\bG})\ltimes_f -$ construction attaches \Cref{eq:canemb} to the equivariant non-degenerate morphism $\bC\to M(A)$, and the same result shows that when $A$ is unital, so that $\bC\to M(A)$ in fact takes values in $A$, \Cref{eq:canemb} similarly factors through
  \begin{equation*}
    C_0^u(\widehat{\bG})\cong C_0^u(\widehat{\bG})\ltimes_f \bC\to C_0^u(\widehat{\bG})\ltimes_f A.
  \end{equation*}
  \Cref{th:prestate} delivers the conclusion.
\end{proof}

\begin{remark}
  We saw in the course of the proof of \Cref{th:prestate} that we can always extend an action $\bG\circlearrowright A$ to the smallest unitization $\widetilde{A}$. By contrast, as \cite[\S 3.1]{vrgn-phd} notes, actions will not extend, in general, to {\it multiplier} algebras.
  
  This is already clear classically: in that case the multiplier algebra $M(C_0(X))$ is the function algebra $C(\beta X)$ on the {\it Stone-\v{C}ech compactification} $\beta X$ of $X$ \cite[exercise 2.C]{wo}. Now, if $\bG$ is a (classical) locally compact group, the extension of the standard translation action $\bG\circlearrowright \bG$ to $\bG\circlearrowright \beta\bG$ is continuous exactly when $\bG$ is either discrete or compact \cite[paragraph preceding Theorem 4.2]{brk}.
\end{remark}



\addcontentsline{toc}{section}{References}

\Addresses

\end{document}